\newtheorem{theorem}{Theorem}[section]
\newtheorem{definition}[theorem]{Definition}
\newtheorem{lemma}[theorem]{Lemma}
\newtheorem{corollary}[theorem]{Corollary}
\newtheorem{remark}[theorem]{Remark}
\newtheorem{question}[theorem]{Question}
\newtheorem*{theorem*}{Theorem}
\newcommand{\RNum}[1]{\lowercase\expandafter{\romannumeral #1\relax}}
\title{Linear quotients, linear resolutions and the lcm-lattice}
\author{Roni Varshavsky}
\date{}
\begin{document}

\newpage
\maketitle
\begin{abstract}
    Linear resolutions and the stronger notion of linear quotients are important properties of monomial ideals. In this paper, we fully characterize linear quotients in terms of the lcm-lattice of monomial ideals. We also formulate an analogous characterization for monomial ideals with linear resolutions, making explicit a relationship that is implicit in the existing literature.
    These results complement characterizations of these two properties in terms of the Alexander dual of the corresponding Stanley-Reisner simplicial complex.
    In addition, we discuss applications to the case of edge ideals.
\end{abstract}
\tableofcontents
\newpage

\newpage

\section{Introduction}

Finitely generated graded modules over finitely generated graded algebras are central objects in commutative algebra, and a variety of algebraic invariants have been introduced to better understand their structure. In particular, the study of free resolutions of such modules has provided deep insights into their algebraic properties. For a finitely generated graded module $U$ over a finitely generated graded algebra $R$, there exists a unique minimal graded free resolution \cite{peeva2010graded} (up to graded isomorphism), and the associated graded Betti numbers are important invariants describing this resolution.

A fundamental and widely studied class of graded modules is that of monomial ideals. These ideals form a natural bridge between commutative algebra and combinatorics, since they give rise to combinatorial structures whose properties reflect algebraic properties of the ideal. The connections between such structures and the algebraic behavior of the corresponding ideals have been explored extensively, for instance through Hochster’s formula (\cite{bruns1998cohen}, 5.5),
see e.g. \cite{RalfFröberg1990, herzog2002resolutions, gasharov1999lcm, Nevo2013C4freeEI}.

Among the various algebraic properties of monomial ideals, the notion of linear quotients plays an important role. Introduced by Herzog and Takayama in \cite{herzog2002resolutions}, linear quotients provide a stronger condition than having a linear resolution, and they have proven to be a powerful combinatorial tool. In particular, they showed that a Stanley–Reisner ideal has linear quotients if and only if its Alexander dual is shellable in the non-pure sense of Björner and Wachs \cite{Bjrner1996ShellableNC}.

Building on this connection between algebraic and combinatorial structures, we focus on characterizing linear quotients through the lcm-lattice of a monomial ideal. The lcm-lattice, introduced by Gasharov, Peeva, and Welker \cite{gasharov1999lcm}, encodes the divisibility relations among least common multiples of the minimal generators of a monomial ideal and has been shown to reflect many of its homological invariants. Phan (\cite{phan2005minimal}, Thm. 2.1) further established that every finite atomic lattice arises as the lcm-lattice of a monomial ideal, 
thereby emphasizing the central role of this structure in combinatorial commutative algebra.

In \cite{bjorner1983lexicographically}, Bj\"orner and Wachs introduce stronger notions of shellability for order complexes of posets, namely \textit{edge-lexicographical shellability} (EL-shellability) and \textit{chain-lexicographical shellability} (CL-shellability). In this paper, we provide a new characterization of monomial ideals with linear quotients in terms of their lcm-lattices. Specifically, we prove the following main result (see \ref{degree_graded_definition} for definition of degree-gradedness):
\begin{theorem}
    \label{linear_quotients_cl_shellability}
    Let $I$ be a monomial ideal over some field $k$ generated in degree $d$, $L=L(I)$ its lcm-lattice. Then $I$ has linear quotients if and only if $L$ is d-degree graded and $CL$-shellable. 
\end{theorem}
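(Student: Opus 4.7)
The plan is to translate between linear quotients of $I$ and chain labelings of $L = L(I)$ via orderings of atoms in both directions, with the $d$-degree graded hypothesis ensuring that each cover relation $u \lessdot v$ corresponds in a controlled way to the adjunction of a single new generator, so that labels indexed by atom positions are unambiguous.

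For the forward direction, given a linear quotients ordering $m_1, \ldots, m_r$, I would define the labeling of a cover by
\[
\lambda(u \lessdot v) \;=\; \min\{\, i : m_i \leq v,\ m_i \not\leq u \,\},
\]
which is independent of the rooted chain and hence defines an edge labeling. I would then check it is an EL-labeling (which implies CL-shellability): in each interval $[u,v]$, a candidate rising chain is built greedily by adjoining at each step the smallest-indexed atom below $v$ not yet below the running join. The crucial technical point is that this greedy adjunction produces a single cover at each step (not a longer jump) and yields strictly increasing labels; both follow from translating the condition that $(m_1, \ldots, m_{i-1}) : m_i$ is generated by variables into the language of joins in $L$. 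Uniqueness of the rising chain and the lex-smallest property are then routine combinatorial checks.

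For the reverse direction, assume $L$ is $d$-degree graded and admits a CL-labeling $\lambda$. From a chosen rising chain in $[\hat{0},\hat{1}]$ one reads off labels on the covers $\hat{0} \lessdot m_i$, and these are used to order the atoms (with an explicit tie-breaking rule for atoms not on that chain). To verify linear quotients, fix $i \geq 2$ and $j < i$; the task is to exhibit a variable in $(m_j) : m_i$ lying in the colon $(m_1, \ldots, m_{i-1}) : m_i$. The candidate variable would be read off from the unique rising chain in the rooted interval from $\hat{0}$ to $\mathrm{lcm}(m_i, m_j)$: the CL-shellability condition together with $d$-degree gradedness force this chain to pass through some atom $m_\ell$ with $\ell < i$, and the single-step degree increase across the cover reaching $\mathrm{lcm}(m_i, m_j)$ then isolates the required variable.

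The principal obstacle is the chain-dependence inherent in CL-labelings for the reverse direction: different rooted chains can produce different labels at the same cover, so extracting a single globally consistent atom order demands reconciling these alternatives and verifying that the chosen ordering actually captures the colon-ideal data. The $d$-degree graded hypothesis must be invoked precisely here to guarantee that each cover is a single-atom adjunction; without it, the correspondence between labels and colon-ideal generators would be disrupted, and neither direction of the equivalence could be made rigorous.
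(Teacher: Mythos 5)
Your approach is genuinely different from the paper's, and it has a gap that you yourself have put your finger on. The paper does not work with chain or edge labelings at all; it uses the equivalent formulation of CL-shellability as a \emph{recursive atom ordering} (RAO), and this is the crucial structural choice. A RAO is literally an ordering of the atoms of $L$, i.e.\ of the generators of $I$, so both directions become direct translations: in one direction the linear quotients ordering is checked to satisfy the RAO conditions (with the $d$-degree graded hypothesis and semimodularity, via Corollary~\ref{only_condition_2}, reducing the verification to just condition~(\RNum{2}) of Definition~\ref{rao_definition}); in the other direction a RAO is checked to be a linear quotients ordering by unpacking condition~(\RNum{2}) into a statement about colon ideals. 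There is no chain-dependence to reconcile at any point.

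The gap in your reverse direction is genuine and, as you note, not incidental. A CL-labeling assigns labels to covers relative to rooted chains, so the labels on the covers $\hat{0} \lessdot m_i$ are not a well-defined function of the atom alone until you fix a choice of root, and there is no single rising chain in $[\hat{0},\hat{1}]$ that ``passes through'' all atoms. Your plan to read off an atom order from one chosen rising chain plus a tie-break, and then argue that the unique rising chain in the rooted interval to $\mathrm{lcm}(m_i,m_j)$ passes through some earlier atom $m_\ell$, does not follow from the CL axioms as you have stated them; a rising chain from $\hat{0}$ passes through exactly one atom, and nothing forces that atom to be one you have already placed before $m_i$. This is precisely the reconciliation problem you flag, and it is what the RAO reformulation is designed to avoid. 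Two further points: in your forward direction you invoke $d$-degree gradedness as a hypothesis, but in that direction it is part of the conclusion and must be proved (the paper gives a direct argument using the colon-ideal structure); and you do not address the reduction from arbitrary monomial ideals generated in a single degree to squarefree ones, which the paper handles via polarization (Lemmas~\ref{linear_quotients_polarization} and~\ref{polarization_lattices_cl_shellability}) and which is needed because the identification of a cover $u \lessdot v$ with multiplication by a single variable, used throughout your sketch, is only automatic in the squarefree $d$-degree graded setting.

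Your forward direction, if carried through, would actually prove EL-shellability, which is formally stronger than CL-shellability; that could be an interesting strengthening. But without a working reverse direction and without the polarization step the proposal does not establish the stated equivalence.
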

In view of [\cite{herzog2011monomial}, Proposition 8.2.5] the CL-shellability condition in Theorem \ref{linear_quotients_cl_shellability}, rather than just shellability, is a bit mysterious. See further discussion in Question \ref{cl_shellability_and_shellability_question}.

In the one dimensional case, it is well known that 
an edge 
ideal has linear quotients if and only if the underlying graph is cochordal ([\cite{RalfFröberg1990}, Theorem 1] + [\cite{herzog2003monomialidealspowerslinear}, Theorem 3.2]). 
The following corollary follows.
\begin{corollary}
\label{chordality_cl_shellability}
    Let $G = (V,E)$ be a simple graph, $k$ be a field, $I$
    be its edge ideal over $k$ and $L = L(I)$ its lcm-lattice. Then $G$ is cochordal if and only if $L$ is 2-degree graded and CL-shellable.
\end{corollary}
For completeness, we state another theorem, which 
is implicit in
the literature, 
in a formulation that complements our main result. 
It characterizes the monomial ideals with linear resolutions in terms of their lcm-lattice, proved explicitly in the Appendix:

\begin{theorem}
    \label{linear_resolutions_cm_lcm_lattice}
    Let $I$ be a monomial ideal over some field $k$, $L=L(I)$ its lcm-lattice. Then $I$ has a $d$-linear resolution if and only if $L(I)$ is $d$-degree graded and Cohen-Macaulay.
\end{theorem}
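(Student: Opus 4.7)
The strategy is to read $d$-linearity of the resolution of $I$ directly off of the open intervals of $L = L(I)$ via the Gasharov--Peeva--Welker formula
\begin{equation*}
\beta_{i,m}(I) \;=\; \dim_k \tilde{H}_{i-1}\bigl((\hat 0,\, m)_L;\,k\bigr), \qquad m \in L \setminus \{\hat 0\}.
\end{equation*}
Having a $d$-linear resolution amounts to $\beta_{i,m}(I) = 0$ whenever $\deg(m) \ne i + d$, which the formula translates into the homological condition
\begin{equation*}
\tilde{H}_j\bigl((\hat 0,\, m);\, k\bigr) = 0 \quad \text{unless} \quad j = \deg(m) - d - 1,
\tag{$\star$}
\end{equation*}
and the whole argument reduces to equating $(\star)$ with $d$-degree gradedness plus Cohen--Macaulayness of $L$.

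For the $(\Leftarrow)$ direction, assume $L$ is $d$-degree graded and CM. Then every atom of $L$ has degree $d$, and every $m \in L$ of rank $r$ has $\deg(m) = r + d - 1$. Cohen--Macaulayness descends to intervals, so $(\hat 0, m)$ has CM order complex of dimension $r - 2$, with reduced homology concentrated in that top degree. Substituting into GPW gives $\beta_{i,m}(I) = 0$ unless $i - 1 = r - 2$, and in that case $\deg(m) = r + d - 1 = i + d$: exactly the $d$-linear condition.

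For the $(\Rightarrow)$ direction, assume $I$ has a $d$-linear resolution, so $(\star)$ holds. A saturated chain $\hat 0 \lessdot m_1 \lessdot \cdots \lessdot m_k = m$ in $L$ has $\deg(m_1) = d$ and each subsequent cover strictly increases degree by at least one, giving the chain-length bound $k \le \deg(m) - d + 1$, whence $\dim \Delta((\hat 0, m)) \le \deg(m) - d - 1$. Combined with $(\star)$ this forces $\tilde{H}_j((\hat 0, m); k) = 0$ for every $j < \dim \Delta((\hat 0, m))$, so every open interval $(\hat 0, m)$ has a CM order complex, and specialization to $m = \hat 1$ yields CM of $L$. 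The $d$-degree graded property is then obtained by induction on rank, using the fact that the sublattice $[\hat 0, m]$ is the lcm-lattice of the sub-ideal generated by the atoms dividing $m$, which inherits a $d$-linear resolution from $I$ via GPW applied to $[\hat 0, m]$; purity of $\Delta((\hat 0, m))$, a consequence of CM-ness, then rules out any cover $a \lessdot m$ with $\deg(m) - \deg(a) \ge 2$.

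\textbf{Main obstacle.} The routine parts are the $(\Leftarrow)$ direction and extracting CM-ness of each $(\hat 0, m)$ from $(\star)$ together with the chain-length bound. The delicate step is showing that $L$ is genuinely ranked with rank function equal to $\deg - d + 1$: if some $m \in L$ has all multigraded Betti numbers vanishing (so $(\hat 0, m)$ is acyclic), the homological input from $(\star)$ becomes trivial at $m$ and the argument must fall back on purity of the order complexes of the intervals $(\hat 0, b)$ for $b > m$, or invoke Phan's minimal-monomial-ideal machinery to reduce to a canonical representative of the lcm-lattice.
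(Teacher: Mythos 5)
The $(\Leftarrow)$ direction of your plan is correct and matches the paper's: CM of $L$ gives CM of each open interval $(\hat 0, m)$, the chain-length/dimension count identifies the top degree, and GPW converts this to the vanishing $\beta_{i,m}(I)=0$ unless $\deg(m)=i+d$.

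The $(\Rightarrow)$ direction has a real gap, and it is exactly the part you flag as ``delicate'' but do not resolve. From $(\star)$ and the chain-length bound you only control the homology of the intervals $(\hat 0, m)$. But Cohen--Macaulayness of the poset $L$ is a condition on \emph{all} links of faces in the order complex, and such a link is a join of intervals $O((\hat 0, f_1)) * O((f_1,f_2)) * \cdots * O((f_k, \hat 1))$; the GPW formula says nothing about the factors $(f_i, f_{i+1})$ with $f_i \ne \hat 0$. The jump ``so every open interval $(\hat 0, m)$ has a CM order complex, and specialization to $m = \hat 1$ yields CM of $L$'' is unjustified: top-degree homology of each $(\hat 0,m)$ does not imply CM of $L$. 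The paper closes this gap by handling the intermediate and upper intervals via semimodularity (Lemma~\ref{d_degree_graded_gives_total_semimodularity}, giving shellability and hence CM of every $(x,y)$ with $x\ne\hat 0$), together with the Künneth argument of Lemma~\ref{Join of CM is CM}. Crucially, that semimodularity argument \emph{requires} $d$-degree gradedness to already be in hand, which is why the paper proves degree-gradedness first (Lemma~\ref{linear resolution implies degree graded}, a nontrivial induction via the super-atom ideal $I_1$ and the Alexander dual, see Lemmas~\ref{super_atoms_in_linear_resolutions} and \ref{ideal_has_linear_resolution_then_I_1_as_well}). Your plan goes in the opposite order --- CM first, then degree-gradedness ``as a consequence of CM-ness via purity'' --- and this is circular: the CM step needs degree-gradedness, and your degree-gradedness step needs CM. You also do not address the passage from squarefree to arbitrary monomial ideals; the paper's degree-gradedness lemma is proved through the Alexander dual and hence needs the polarization reduction (Lemmas~\ref{linear_resolutions_polarizations} and \ref{polarization_lattices_cohen_macaulay}).
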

\section{Preliminaries}

\subsection{Monomial ideals}

For an introduction to linear resolutions and graded Betti numbers, we refer the reader to \cite{peeva2010graded}.

\begin{lemma}
Let $I\triangleleft S$  be a monomial ideal. Then there exists a unique minimal set of monomial generators of $I$, denoted by $G(I)$.
\end{lemma}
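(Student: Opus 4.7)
The plan is to identify $G(I)$ explicitly as the set of monomials in $I$ that are minimal with respect to the divisibility partial order, and then verify both that this set generates $I$ and that it is forced to sit inside any monomial generating set of $I$.

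First I would set up notation: let $M(I)$ denote the set of all monomials in $I$ and consider $M(I)$ as a poset under divisibility. Define
\[
G(I) := \{\, m \in M(I) : \text{no monomial } m' \in M(I) \text{ with } m' \mid m \text{ and } m' \neq m \,\}.
\]
Since total degree strictly decreases along proper divisibility, the divisibility order on $M(I)$ is well-founded, so every $m \in M(I)$ has at least one minimal divisor in $M(I)$, which lies in $G(I)$.

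Next, to show $G(I)$ generates $I$, I would rely on the standard monomial-support lemma: if $I$ is generated (as an ideal) by a collection of monomials $\{m_\alpha\}$, then a monomial $m$ lies in $I$ if and only if $m$ is divisible by some $m_\alpha$. This is proved by writing $m = \sum f_\alpha m_\alpha$, expanding each $f_\alpha$ into its monomial terms, and observing that the monomial $m$ must appear in the support of some $f_\alpha m_\alpha$, which forces $m_\alpha \mid m$. Using this fact, every monomial in $I$ is divisible by some element of $G(I)$ (namely, by a minimal divisor in $M(I)$), so $G(I)$ generates $I$ as an ideal.

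Finally, for uniqueness and minimality, suppose $H \subseteq M(I)$ is any monomial generating set of $I$. By the support lemma again, for each $g \in G(I)$ there exists $h \in H$ with $h \mid g$; but $h \in M(I)$ and $g$ is minimal in $M(I)$ under divisibility, so $h = g$, giving $G(I) \subseteq H$. In particular, $G(I)$ itself is a monomial generating set, and removing any element of $G(I)$ would violate this inclusion, so $G(I)$ is the unique minimal monomial generating set of $I$. The only real technical point is the support lemma in step two; everything else is formal order theory. I would phrase that lemma carefully since it is the one place where the monomial structure (as opposed to abstract ideal theory) is essential.
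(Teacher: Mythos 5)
The paper states this lemma without proof, as it is a standard foundational fact in the theory of monomial ideals (see, e.g., \cite{herzog2011monomial}), so there is no paper proof to compare against. Your argument is the standard one and is correct: you identify $G(I)$ as the set of monomials of $I$ minimal under divisibility, observe that well-foundedness of divisibility guarantees every monomial of $I$ has a minimal monomial divisor in $I$, use the support lemma (a monomial lies in a monomial ideal iff it is divisible by one of the monomial generators) both to show $G(I)$ generates $I$ and to show $G(I)$ is contained in every monomial generating set $H$, and conclude minimality and uniqueness from that containment. The only thing worth tightening in the write-up is the last sentence: rather than saying ``removing any element of $G(I)$ would violate this inclusion,'' it is cleaner to argue directly that $G(I)\subseteq H$ for every monomial generating set $H$ implies both that $G(I)$ is itself minimal (any proper subset of $G(I)$ that generated would have to contain $G(I)$) and that any minimal $H$ must equal $G(I)$ (a subset $G(I)\subseteq H$ already generates, so minimality forces $H=G(I)$). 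This is exactly what you meant, but spelling it out removes the appearance of a gap.
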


Let $R$  be any commutative ring, $I,J\triangleleft R$.  The colon ideal $(I:J)$ is $$(I:J)=\{r\in R\mid rJ\subseteq I\}.$$ 
\begin{definition}
    Let $I$  be a monomial ideal. We say that $I$  has linear quotients if there exists an ordering on the generators $G(I)$, namely $m_1,\dots,m_k$  such that $\forall 1\leq i\leq k$ the colon ideal $$((m_1,\dots,m_{i-1}):m_i)$$ is generated by variables.
\end{definition}

A useful fact in the case of monomial ideals is

\begin{lemma}
    \label{colon_structure}
    Let $I\triangleleft R$  be a monomial ideal, $v$ a monomial. Then $(I:(v))$  is a monomial ideal and $$\left\{\frac{u}{gcd(u,v)}\mid u\in G(I)\right\}$$ is a set of generators for $(I:(v))$.
\end{lemma}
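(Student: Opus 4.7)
The plan is to establish the two claims separately. First I would verify that $(I:(v))$ is a monomial ideal, and then show that the proposed set of quotients generates it, by demonstrating containment in both directions.

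For the first claim, I would take an arbitrary element $r \in (I:(v))$ and write it as a sum of monomial terms $r = \sum c_\alpha x^\alpha$. Since $rv \in I$ and $I$ is a monomial ideal, every monomial appearing in $rv$ with nonzero coefficient lies in $I$; each such monomial has the form $c_\alpha x^\alpha v$, so $x^\alpha v \in I$ and hence $x^\alpha \in (I:(v))$. This shows $(I:(v))$ is generated by monomials, so it suffices to check which monomials it contains.

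For the second claim, I would prove both inclusions. For each $u \in G(I)$, note that $(u/\gcd(u,v)) \cdot v = \mathrm{lcm}(u,v)$, which is a multiple of $u$ and therefore lies in $I$; hence $u/\gcd(u,v) \in (I:(v))$, which gives one inclusion. Conversely, let $w$ be any monomial in $(I:(v))$. Then $wv \in I$, so there is some $u \in G(I)$ dividing $wv$. Writing $g = \gcd(u,v)$ and dividing by $g$ gives $(u/g) \mid w \cdot (v/g)$; since $u/g$ and $v/g$ are coprime monomials, standard unique factorization for monomials yields $(u/g) \mid w$. Hence $w$ is a multiple of $u/\gcd(u,v)$, which completes the reverse inclusion.

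The only step that requires any real care is the coprimality argument in the last paragraph, since everything else is a direct unraveling of the definitions of colon ideal and monomial ideal. Even that step is essentially routine: after dividing out the gcd, the coprimality of $u/g$ and $v/g$ reduces the divisibility to checking it variable by variable, so there is no genuine obstacle. No intermediate lemmas beyond elementary facts about gcd and lcm of monomials are needed.
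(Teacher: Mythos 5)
Your proof is correct and complete; the paper itself states this lemma without proof, treating it as a standard fact (it appears, e.g., in Herzog--Hibi's \emph{Monomial Ideals}). Your argument is the standard one: the coprimality step in the reverse inclusion, that $\gcd(u/g, v/g)=1$ forces $(u/g)\mid w$, is exactly the key point, and the rest unwinds the definitions as you say.
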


\begin{theorem}[\cite{herzog2011monomial}, Theorem 8.2.1]
\label{linear_quotients_stronger_than_linear_resolution}
Let $I$  be a monomial ideal generated in degree $d$  (i.e. the elements of $G(I)$ are all monomials of degree $d$). 
If $I$ has linear quotients
then $I$  has a $d$-linear resolution.
\end{theorem}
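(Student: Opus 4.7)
The plan is to prove the statement by induction on the number $k$ of elements in $G(I)$. Fix an ordering $m_1, \ldots, m_k$ witnessing the linear quotients, and set $I_i = (m_1, \ldots, m_i)$. The base case $I_1 = (m_1)$ is immediate, since a principal ideal generated in degree $d$ has the trivial $d$-linear resolution $0 \to S(-d) \to I_1 \to 0$. The inductive step I want to establish is that if $I_{i-1}$ has a $d$-linear resolution, then so does $I_i$; equivalently, that $\operatorname{Tor}_j^S(I_i, k)_\ell = 0$ whenever $\ell \neq j+d$.

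For the inductive step, I would exploit the short exact sequence
$$0 \to I_{i-1} \to I_i \to I_i/I_{i-1} \to 0.$$
Multiplication by $m_i$ induces a graded $S$-module isomorphism $I_i/I_{i-1} \cong \bigl(S/(I_{i-1}:m_i)\bigr)(-d)$. By the linear quotients hypothesis, the colon ideal $J_i := (I_{i-1}:m_i)$ is generated by a set of variables, so the Koszul complex on those variables gives a minimal free resolution of $S/J_i$ that is linear starting in degree $0$; after applying the degree shift, this yields $\operatorname{Tor}_j^S\bigl((S/J_i)(-d), k\bigr)_\ell = 0$ whenever $\ell \neq j+d$. Combining this with the inductive vanishing for $I_{i-1}$ and the long exact sequence of $\operatorname{Tor}$, the group $\operatorname{Tor}_j^S(I_i, k)_\ell$ sits between two zero groups whenever $\ell \neq j+d$, and the desired vanishing follows.

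The main obstacle I anticipate is the careful identification of $I_i/I_{i-1}$ with the shifted quotient $\bigl(S/J_i\bigr)(-d)$: one must verify that the map $S(-d) \xrightarrow{\cdot m_i} I_i/I_{i-1}$ is surjective with kernel exactly $J_i(-d)$. Surjectivity is immediate from $I_i = I_{i-1} + (m_i)$, while the kernel computation is a direct translation of the definition of the colon ideal. A smaller but essential subtlety is to invoke Lemma \ref{colon_structure}, so that the linear quotients hypothesis genuinely forces $J_i$ to be minimally generated by variables, thereby justifying the use of the Koszul complex as the minimal (and hence linear) resolution of $S/J_i$ rather than just some resolution. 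Once these identifications are in place, the bookkeeping on the internal degrees $\ell$ in the long exact Tor sequence is straightforward.
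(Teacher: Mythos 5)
Your proof is correct. Note that the paper does not prove this statement---it cites it directly from Herzog--Hibi (Monomial Ideals, Theorem 8.2.1)---but your argument is essentially the standard one given there: induct on the number of generators, pass to the short exact sequence $0 \to I_{i-1} \to I_i \to I_i/I_{i-1} \to 0$, identify $I_i/I_{i-1}$ with $\bigl(S/(I_{i-1}:m_i)\bigr)(-d)$, resolve the cyclic quotient by the Koszul complex on the variables generating the colon ideal, and read off the degree-wise Tor vanishing from the long exact sequence. All the degree bookkeeping and the kernel identification you flag as the potential obstacles go through exactly as you describe.
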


The following definition is a useful technique for reducing problems on monomial ideals to the squarefree case:
\begin{definition}[Polarization]
\label{polarization_def}
    Let $I\subseteq S=k[x_1,\dots,x_n]$  be  a monomial ideal with minimal set of generators $G(I)=\{u_1,\dots,u_m\}$ with $u_i=\prod_{j=1}^n{x_j}^{a_{i,j}}$  $\forall1\leq i\leq m$. Denote for all $1\leq j\leq n$ $a_j=\text{max}_i\{a_{i,j}\}$ and consider the polynomial ring $$T=k[x_{11},x_{12},\dots,x_{1a_1},x_{21},\dots,x_{2a_2} ,\dots ,x_{n1},\dots ,x_{na_n}]$$ Denote now $\textit{for all } 1\leq i\leq m$ $$  v_i=\prod_{j=1}^{n}\prod_{k=1}^{a_{i,j}}x_{jk}$$ The polarization of $I$  is the ideal $J\triangleleft T$ generated by the elements $\{v_1,\dots ,v_m\}$. A standard notation for $J$ is $\tilde{I}$.
\end{definition}

For further reading about monomial ideals, see \cite{herzog2011monomial}.

\subsection{Simplicial complexes corresponding to monomial ideals}
Let $n\in\mathbb{N}$. A \textbf{simplicial complex} on vertices $[n]$ is a downward closed subset $\Delta$ of the powerset $\mathcal{P}([n])$. Every element $E\in \Delta$ is called a \textbf{face}.

 A \textbf{facet} of $\Delta$  is a maximal face $F\in \Delta$ w.r.t. inclusion and the set of facets is denoted by $\mathcal{F}(\Delta)$. A minimal non-face is a set $F\subseteq [n]$ such that $F\notin \Delta$  and $\forall T\subsetneq F,\ T\in \Delta$. A simplicial complex is determined by its set of facets, and given a set $S\subseteq \mathcal{P}([n])$ such that no two elements of $S$  are contained in each other, we denote by $\left<S\right>$  the simplicial complex $\Delta$  with $\mathcal{F}(\Delta)=S$.
 The \textbf{dimension}  of a face $F\in \Delta$ is equal to $|F|-1$, and the dimension of a simplicial complex is $$\text{dim}(\Delta)=\text{max}_{F\in \Delta}(\text{dim}(F)).$$ The \textbf{link}  of a face $F$ in $\Delta$ is the set $\left\{T\in \Delta: \ T\cup F\in \Delta, T\cap F = \emptyset\right\}$  and is denoted $\text{link}_{\Delta}F$.

\begin{definition}
    Let $n\in \mathbb{N}$,  $S=k[x_1,\dots ,x_n]$ and $F\subseteq [n]$.  We write $x^F=\prod_{i\in F}x_i$. Now let $\Delta$ be a simplicial complex, the \textbf{Stanley-Reisner ideal} of $\Delta$  is $I_{\Delta} =\left(x^F, F\notin \Delta\right)$. It is generated by the monomials corresponding to the \textbf{minimal non-faces} of $\Delta$. The \textbf{facet ideal} is the monomial ideal generated by the facets of $\Delta$. Given an ideal $I\triangleleft S$  we denote by $\Delta(I)$ the simplicial complex $\left\{F\subseteq [n], x^F\notin I\right\}$ 
\end{definition}

It is easy to see the following:

\begin{lemma}
    \label{stanley-reisner correspondence}
    For a squarefree monomial ideal $I$, we have $I_{\Delta(I)}=I$ and for a simplicial complex $\Delta$, we have $\Delta(I_{\Delta})=\Delta$ 
\end{lemma}
For a simplicial complex $\Delta$, the \textbf{Alexander dual} is denoted by $\Delta^{\vee}$  and is equal to $\Delta ^{\vee}=\left\{[n]\setminus F,F\notin \Delta\right\}$. It is straightforward that the Alexander dual is also a simplicial complex satisfying $$\Delta^{\vee}=\left<[n]\setminus F, \text{F is a minimal non-face of } \Delta\right>.$$

One important combinatorial property of simplicial complexes is \textbf{shellability}. Topologically, shellable complexes are homotopy equivalent to a wedge sum of spheres. However the converse is not true, as there are examples of non-shellable triangulations of spheres e.g. \cite{VINCE198591}.
\begin{definition}
    A simplicial complex $\Delta$ is \textbf{pure} if all of its facets are of the same dimension. A pure simplicial complex is called \textbf{shellable} if there exists an ordering of its facets $F_1,\dots , F_k$  such that for all $1\leq i\leq k$ the intersection $$2^{F_i} \bigcap \left(\bigcup _{j=1} ^{i-1}2^{F_j}\right)$$ is pure of dimension $dim(F_i)-1$.
\end{definition}

    A similar in spirit, weaker property for pure simplicial complexes is \textbf{constructibility}. It is defined recursively as follows:
    \begin{definition}
    A pure simplicial complex $\Delta$ is \textbf{constructible} if either
    \begin{enumerate}
        \item $\Delta$ is a simplex, or
        \item $\Delta=\Delta_1\cup \Delta_2$ such that $\text{dim}(\Delta_1)=\text{dim}(\Delta_2)=\text{dim}(\Delta_1\cap\Delta_2)+1$ and $\Delta, \Delta_1, \Delta_2$  are all constructible.
    \end{enumerate}
    \end{definition}

Another property, topological this time, is \textbf{Cohen-Macaulayness}:
\begin{definition}
    A simplicial complex $\Delta$ is \textbf{Cohen-Macaulay} over a field $k$ if $$\textit{for all } F\in \Delta \ \tilde{H}_i(\text{link}_{\Delta}F;k)=0 \ \textit{for all } i<\text{dim}(\text{link}_{\Delta}F)$$
\end{definition}

\subsection{The lcm-lattice}

First, some basic definitions regarding posets:

\begin{definition}
    Let $P$ be a poset. For $a,b\in P$ we say that $b$ \textbf{covers}  $a$  if $a<b$ and $\forall a\leq c \leq b $  we have either $b=c$  or $a=c$, and we write $a\prec b$. Such a pair $(a,b)$  is called an \textbf{edge} in $P$. If $P$  has a minimal element $\hat{0}$  then elements covering it are called \textbf{atoms}. A finite lattice where every element is a join of atoms is called \textbf{atomic}. 
\end{definition}

For every finite poset there is a natural simplicial complex associated to it:
\begin{definition}
    Let $P$  be a finite poset. A chain in $P$  is a totally ordered subset. A chain in $P$  is called \textbf{unrefinable} (or saturated) if it is of the form $(a_1\prec a_2 \prec \dots \prec a_n)$. The order complex of $P$, denoted by $O(P)$ is the simplicial complex with vertices the elements of $P$  and faces the chains of $P$.  A finite poset is said to be \textbf{graded} if it is bounded and the order complex $O(P)$  is pure.
\end{definition}

\begin{definition}
    Let $I$  be a monomial ideal with minimal generating set $G(I)=\{m_1,\dots,m_r\}$. The \textbf{lcm-lattice} of $I$,  denoted by $L(I)$ or $LCM(I)$ is the finite atomic lattice consisting of the least common multiples of all subsets of $G(I)$, ordered by divisibility. $1$  is the unique minimal element of the lattice, representing the lcm of the empty set. For an lcm-lattice $L$, we denote by $m_L$ the unique maximal element.
\end{definition}

The following equivalence between finite atomic lattices and lcm-lattices was proven by Phan in \cite{phan2005minimal}.

\begin{theorem}
\label{minimal_ideal_definition}
    Let $L$  be a finite atomic lattice. Then there exists a monomial ideal $I\triangleleft k[x_1,\dots,x_n]$  for some $n$  such that $L\cong L(I)$. This ideal is called the minimal ideal of $L$ and is denoted by $M(L)$.
\end{theorem}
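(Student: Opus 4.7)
The plan is to give an explicit construction of a monomial ideal realizing $L$. Index a family of variables by the non-maximal elements of $L$: work in the polynomial ring $T = k[x_p : p \in L \setminus \{\hat{1}\}]$, where $\hat{1}$ denotes the top element of $L$. For each atom $a$ of $L$, set
$$m_a \;=\; \prod_{\substack{p \in L \setminus \{\hat{1}\} \\ a \not\leq p}} x_p,$$
and define $M(L) = (m_a : a \text{ an atom of } L) \triangleleft T$. A first small check is that $\{m_a\}$ is a minimal generating set: if $a \neq a'$ are distinct atoms, then $a$ itself lies in $L \setminus \{\hat{1}\}$, and $a' \not\leq a$ (both being distinct atoms), so $x_a$ divides $m_{a'}$ but not $m_a$. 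Hence $m_{a'} \nmid m_a$, and by symmetry no $m_a$ divides another.

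To build the isomorphism, I would introduce a candidate map $\mu : L \to L(M(L))$ by
$$\mu(q) \;=\; \prod_{\substack{p \in L \setminus \{\hat{1}\} \\ q \not\leq p}} x_p.$$
Using the tautology that $q \leq p$ is equivalent to $a \leq p$ for every atom $a$ below $q$, a direct manipulation gives $\mu\bigl(\bigvee_{a \in S} a\bigr) = \mathrm{lcm}\{m_a : a \in S\}$ for every set $S$ of atoms of $L$. Since $L$ is atomic, every element of $L$ is such a join, so the image of $\mu$ is exactly the set of lcms of subsets of $G(M(L))$, which is precisely $L(M(L))$.

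It remains to see that $\mu$ is a lattice isomorphism. Order-preservation is immediate from the definition: $q_1 \leq q_2$ enlarges the set of $p$ with $q \not\leq p$, so $\mu(q_1) \mid \mu(q_2)$. For injectivity, $q$ can be recovered from $\mu(q)$ as the minimum of $\{p \in L \setminus \{\hat{1}\} : q \leq p\}$, the set of indices of variables \emph{not} appearing in $\mu(q)$ (interpreting the empty-set case as $q = \hat{1}$). The reverse implication $\mu(q_1) \mid \mu(q_2) \Rightarrow q_1 \leq q_2$ follows from the same containment argument, upgrading the bijection to an isomorphism of posets and hence of lattices. The main (mild) obstacle is identifying the correct indexing set for the variables and the correct formula for $m_a$ so that the join operation on atoms of $L$ translates exactly into taking lcms of the corresponding generators; once this dictionary is in place, the verification is essentially formal set manipulation using atomicity.
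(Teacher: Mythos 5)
Your construction is correct, and it does establish the existence claim: indexing variables by $L\setminus\{\hat 1\}$, setting $m_a=\prod_{a\not\le p}x_p$ for each atom $a$, and checking that lcms of these generators correspond exactly to joins of atoms in $L$ via the map $\mu$. The key identity
\[
\mathrm{lcm}\{m_a : a\in S\}=\prod_{\substack{p\in L\setminus\{\hat 1\}\\ \bigvee S\,\not\le\, p}}x_p=\mu\bigl(\textstyle\bigvee S\bigr)
\]
holds precisely because, in an atomic lattice, $\bigvee S\le p$ iff $a\le p$ for every $a\in S$; and your order-reflection argument (reading off $q$ as the minimum of the set of absent indices when $q\ne\hat 1$) correctly upgrades this to a lattice isomorphism. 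Minimality of the generating set, the edge cases $q=\hat 0$ (empty product, giving $1$) and $q=\hat 1$ (empty absent set) all check out.

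That said, this is a genuinely different construction from the one the paper is citing. The theorem names the constructed ideal $M(L)$ and calls it the \emph{minimal} ideal of $L$, referring to Phan's specific construction in \cite{phan2005minimal}, where variables are indexed by the meet-irreducible elements of $L$ (those covered by a unique element) rather than all of $L\setminus\{\hat 1\}$. Phan's choice is economical and enjoys a universal/minimality property among monomial ideals with lcm-lattice $L$, which is part of why it earns the name ``minimal ideal.'' Your construction uses substantially more variables (e.g.\ $|L|-1=7$ variables for the Boolean lattice on three atoms, versus Phan's $3$), and as a bonus sanity check, $x_{\hat 0}$ always divides every generator and contributes nothing. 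So your argument is a clean and self-contained proof of the \emph{existence} assertion, but it does not produce the specific ideal $M(L)$ that the rest of the paper refers to (e.g.\ in Theorem~\ref{cm_lcm_lattice_equivalence_phan}); if you wanted to match Phan's $M(L)$ you would need to restrict the index set to meet-irreducibles. For the bare statement ``there exists $I$ with $L\cong L(I)$,'' your proof is fine and arguably simpler to verify than the minimal one.
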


The following theorem connects the topology of the lcm-lattice of a monomial ideal $I$  and the graded Betti number $\beta_{i,j}(I)$. It was proven by Gasharov, Peeva and Welker in \cite{gasharov1999lcm} (see also \cite{peeva2010graded}, Theorem 58.8):

\begin{theorem}
    \label{betti_lcm_lattice_connection}
    Let $I\triangleleft k[x_1,\dots ,x_n]$ be a monomial ideal, $L=L(I)$  its lcm-lattice. The multigraded Betti numbers of $I$ are given by $$\beta_{i,m}(I)=\begin{cases}
        \textrm{dim} \ \tilde{H}_{i-1, m}(O((1,m));k) & 1\neq m \in L \\
        0 & m \notin L
    \end{cases}$$
\end{theorem}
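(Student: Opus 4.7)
The plan is to compute $\beta_{i,m}(I) = \dim_k \mathrm{Tor}^S_i(I, k)_m$ via the Koszul complex on $(x_1, \ldots, x_n)$, reinterpret the multidegree-$m$ strand as the reduced simplicial chain complex of the \emph{upper Koszul simplicial complex}
$$K^m(I) := \{\sigma \subseteq [n] : x^\sigma \mid m \text{ and } m/x^\sigma \in I\},$$
and then establish the homotopy equivalence $K^m(I) \simeq O((1,m))$ whenever $m \in L(I)$.

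For the Tor computation, I would take $K_\bullet$ to be the standard Koszul resolution of $k$ over $S$, so that $\beta_{i,m}(I) = \dim H_i(I \otimes_S K_\bullet)_m$. A direct inspection shows that a basis for the $i$-th term of the multidegree-$m$ strand is indexed by the $(i-1)$-dimensional faces of $K^m(I)$, and the induced differential matches the simplicial boundary up to sign; hence $\beta_{i,m}(I) = \dim \tilde H_{i-1}(K^m(I); k)$. In the case $m \notin L(I)$, set $m' := \mathrm{lcm}\{u \in G(I) : u \mid m\}$; the hypothesis forces $m'$ to properly divide $m$, so any variable $x_j$ appearing in $m/m'$ lies in every nonempty face of $K^m(I)$, exhibiting $K^m(I)$ as a cone (and hence contractible), which yields $\beta_{i,m}(I) = 0$ and handles the second case of the formula.

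For $1 \neq m \in L(I)$, the main remaining task --- and chief technical obstacle --- is the homotopy equivalence $K^m(I) \simeq O((1,m))$. The natural tool is Quillen's fiber lemma (or, equivalently, Bj\"orner's crosscut/nerve theorem for lattices): one builds an order map between the face poset of $K^m(I)$ (or its opposite) and the closed interval $(\hat 0, m] \subseteq L$, via the assignment $\sigma \mapsto \mathrm{lcm}\{u \in G(I) : u \mid m/x^\sigma\}$, and verifies that fibers over principal order ideals are contractible, each being essentially a simplex on a set of vertices dividing a fixed monomial. The index shift by $1$ in $\tilde H_{i-1}$ is then absorbed into the standard passage between a simplicial complex and the order complex of its face poset. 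The hardest step is the explicit verification of fiber-contractibility, which requires careful bookkeeping to correctly reduce from $(\hat 0, m]$ to the open interval $(1, m)$ after accounting for the apex at $m$; once this is done, combined with the Tor identification above, it yields the stated formula.
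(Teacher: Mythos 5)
The paper does not prove this statement: it is the Gasharov--Peeva--Welker theorem, cited from \cite{gasharov1999lcm} and \cite{peeva2010graded}, so there is no ``paper proof'' to compare against. Judged on its own, your outline is a correct and fairly standard route (Koszul strand $\to$ upper Koszul simplicial complex $K^m(I)$ $\to$ Quillen fiber lemma $\to$ order complex of the open interval), and the cone argument for $m\notin L(I)$ is right (choose $j$ with $\deg_{x_j}(m)>\deg_{x_j}(m')$; then for any $\sigma\in K^m(I)$ not containing $j$, any $u\in G(I)$ dividing $m/x^\sigma$ also divides $m'$, hence still divides $m/x^{\sigma\cup\{j\}}$).

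One point you flag as the ``chief technical obstacle'' --- passing from $(\hat 0,m]$ to $(1,m)$ --- is in fact a non-issue, and realizing this makes the argument close cleanly. For a \emph{nonempty} face $\sigma\in K^m(I)$, every $u\in G(I)$ with $u\mid m/x^\sigma$ has $u\mid m/x^\sigma$, so $\phi(\sigma):=\mathrm{lcm}\{u\in G(I):u\mid m/x^\sigma\}$ divides $m/x^\sigma$, which is a proper divisor of $m$; and $\phi(\sigma)>1$ because $\sigma\in K^m(I)$ forces some generator to divide $m/x^\sigma$. So $\phi$ already maps the poset of nonempty faces of $K^m(I)$ order-reversingly into the \emph{open} interval $(1,m)$, with no apex to excise. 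The fiber computation is then exactly the simplex you anticipated: for $l\in(1,m)$, one checks $l\mid\phi(\sigma)\iff x^\sigma\mid m/l$ (the forward direction since $\phi(\sigma)\mid m/x^\sigma$, the reverse since every atom below $l$ divides $l\mid m/x^\sigma$), and every nonempty $\sigma$ with $x^\sigma\mid m/l$ lies in $K^m(I)$ automatically because $l\mid m/x^\sigma$ puts $m/x^\sigma$ in $I$. Hence $\phi^{-1}([l,m))$ is the full simplex on $\{j:x_j\mid m/l\}$ minus the empty face, a nonempty simplex, hence contractible, and Quillen's lemma gives $K^m(I)\simeq O((1,m))$. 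One small caution: your remark about absorbing ``the index shift by $1$'' into the passage to the order complex of the face poset is misleading --- barycentric subdivision introduces no degree shift; the $i-1$ in $\tilde H_{i-1}$ already comes entirely from the Koszul indexing.
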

As for the lcm-lattice of a polarization of a monomial ideal, we have
\begin{lemma}[\cite{peeva2010graded}, 58.4]
    \label{polarization_lattices_iso}
    Let $I$  be a monomial ideal, $J$ its polarization, $L(I), L(J)$ the respective lcm-lattices. Then $L(I)\cong L(J)$.
\end{lemma}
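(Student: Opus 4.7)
The plan is to exhibit an explicit lattice isomorphism between $L(I)$ and $L(J)$ coming directly from the polarization construction itself. Define a map $\phi$ on the monomials of $S = k[x_1,\dots,x_n]$ by
\[
\phi\!\left(\prod_{j=1}^{n} x_j^{b_j}\right) = \prod_{j=1}^{n}\prod_{k=1}^{b_j} x_{jk},
\]
so that by Definition~\ref{polarization_def} we have $\phi(u_i) = v_i$ for every generator. This $\phi$ is not a ring homomorphism, but it is a divisibility-preserving injection: $m \mid m'$ if and only if $\phi(m) \mid \phi(m')$, because the $x_{jk}$-exponent of $\phi(m)$ is $1$ precisely when $k \leq b_j$, so divisibility of images amounts to the componentwise inequality of exponent vectors, which is divisibility of $m \mid m'$.

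The central computation I would carry out is that $\phi$ commutes with taking lcms of generators. For any subset $S \subseteq \{1,\dots,m\}$, the $x_j$-exponent of $\mathrm{lcm}(u_i : i \in S)$ is $M_j := \max_{i \in S} a_{i,j}$, so $\phi(\mathrm{lcm}(u_i : i \in S)) = \prod_j \prod_{k=1}^{M_j} x_{jk}$. On the other side, the variable $x_{jk}$ appears in $\mathrm{lcm}(v_i : i \in S)$ if and only if it appears in some $v_i$ with $i \in S$, which holds exactly when $k \leq M_j$. Hence $\phi(\mathrm{lcm}(u_i : i \in S)) = \mathrm{lcm}(v_i : i \in S)$, which shows that $\phi$ restricts to a well-defined map $L(I) \to L(J)$ and that every element of $L(J)$ lies in its image.

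It remains to check that this restriction is a lattice isomorphism. Injectivity follows from injectivity of $\phi$ on all monomials (the staircase shape of $\phi(m)$ recovers the exponents $b_j$). Surjectivity was just observed. Since $L(I)$ and $L(J)$ are both posets under divisibility and the join in an lcm-lattice is the monomial lcm of the corresponding generator subsets, the compatibility $\phi(\mathrm{lcm}(\cdot)) = \mathrm{lcm}(\phi(\cdot))$ and the divisibility-preservation of $\phi$ together imply that $\phi$ is an order isomorphism of finite lattices, hence a lattice isomorphism.

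The only real obstacle is the lcm-compatibility step, and this reduces to the elementary combinatorial observation that $\max$ and ``taking an initial segment of length $b$'' commute in the sense described above. Everything else is bookkeeping about how $\phi$ translates componentwise exponent information into incidence of the doubled variables $x_{jk}$.
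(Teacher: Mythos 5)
The paper does not prove this lemma; it is quoted as a known fact from Peeva's book (58.4), so there is no in-paper argument to compare against. Your proof is correct and is exactly the natural argument one would expect the reference to give: the polarization map $\phi$ sends exponent vectors to staircase-shaped squarefree supports, so it preserves and reflects divisibility, and the key observation that $\max$ (which computes lcm exponents) commutes with ``take the initial segment of that length'' gives $\phi(\mathrm{lcm}_{i\in S} u_i) = \mathrm{lcm}_{i\in S} v_i$. That identity yields both well-definedness and surjectivity onto $L(J)$ in one stroke, and an order-preserving-and-reflecting bijection between finite lattices is automatically a lattice isomorphism. One small remark worth adding if you intend this to support the later Lemmas~\ref{polarization_lattices_cohen_macaulay} and~\ref{polarization_lattices_cl_shellability}: your explicit $\phi$ also preserves total degree, since $\deg\phi(m) = \sum_j b_j = \deg m$; the paper uses this degree-preservation but your write-up only implicitly contains it.
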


Bj\"orner and Wachs \cite{bjorner1983lexicographically} presented a notion of \textit{CL-shellability} of graded posets that is stronger than shellability of posets. They also introduced the equivalent notion of Recursive atom orderings of posets:

\begin{definition}
\label{rao_definition}
    Let $P$  be a graded poset. $P$  admits a \textbf{recursive atom ordering} (RAO) if either $\bar{P}=\emptyset$ (where $\bar{P}=P-\{\hat{0}, \hat{1}\})$ 
    or if there is an ordering of the atoms $a_1,a_2,\dots ,a_t$ of $P$  such that:
    \begin{enumerate}[(i)]
\item $\textrm{for all } 1\leq j\leq t$ the interval $[a_j,\hat{1}]$  admits a recursive atom ordering such that the atoms of $[a_j,\hat{1}]$  that cover some $a_i$  for some $i<j$  appear before the atoms that do not have this property.
        \item $\textrm{for all } 1\leq i<j\leq t $ there exists $k<j$  such that $\textrm{for all } y, \ a_i,a_j\leq y$ there exists $z\in P$  such that $a_k,a_j \prec z \leq y$.
    \end{enumerate}
\end{definition}

We recall the definitions of semimodularity and total semimodularity:

\begin{definition}
\label{semimodularity}
A poset $P$  is called \textbf{upper semimodular}, or sometimes simply \textbf{semimodular} if $\textrm{for all } a,b \in P$ such that $c\prec a,b$  for some $c\in P$, then there exists $d\in P$ such that $a,b \prec d$.
    Reducing to lattices, a finite lattice $L$  is called \textbf{semimodular} if $\forall a,b\in L\ s.t. \ a\wedge b \prec a,b$  it holds that $a,b\prec a\vee b$. A finite poset $P$  is said to be \textbf{totally semimodular}  if it is bounded and for every closed interval $[x,y]\subseteq P$, $[x,y]$ is semimodular.
\end{definition}

\section{The lcm-lattice of ideals with linear resolutions and linear quotients}

\label{linear_quotients_ideals_cl_shellability_section}
Having linear resolutions or linear quotients are important properties for a monomial ideal. It is therefore interesting to study the behavior of associated combinatorial structures in this case.

\subsection{Shellability and linear quotients}
For linear quotients, there is a correspondence in the language of Alexander duality introduced in \cite{herzog2002resolutions} and proven in \cite{herzog2003monomialidealspowerslinear}, namely:

\begin{theorem}[\cite{herzog2011monomial}, Proposition 8.2.5]
\label{linear_quotients_alexander_dual}
    Let $\Delta$ be a simplicial complex. Then $I_\Delta$ has linear quotients if and only if $\Delta ^{\vee}$ is shellable.
\end{theorem}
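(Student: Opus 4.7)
My plan is to set up a direct dictionary between orderings of the minimal generators of $I_\Delta$ and orderings of the facets of $\Delta^\vee$, and to verify that under this dictionary the linear quotients condition translates literally into the shelling condition. The bijection I would use sends a minimal non-face $F$ of $\Delta$ to the generator $x^F$ of $I_\Delta$ on one side and to the facet $[n]\setminus F$ of $\Delta^\vee$ on the other. I fix an ordering $m_1,\dots,m_t$ of $G(I_\Delta)$ with $m_i=x^{F_i}$, and write $G_i := [n]\setminus F_i$ for the corresponding facet of $\Delta^\vee$.

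The first step is to describe $(m_1,\dots,m_{j-1}):m_j$ explicitly. By Lemma \ref{colon_structure} it is generated by $\{x^{F_i\setminus F_j}:i<j\}$, and asking that it be generated by variables amounts to asking that, for each $i<j$, some $x_v$ with $v\in F_i\setminus F_j$ lies in this colon ideal. Unwinding, $x_v\in(m_1,\dots,m_{j-1}):m_j$ means there is $k<j$ with $m_k\mid x_v m_j$, i.e. $F_k\subseteq F_j\cup\{v\}$; minimality of $G(I_\Delta)$ forces $F_k\not\subseteq F_j$, so in fact $F_k\setminus F_j=\{v\}$. Using the identity $A\setminus B=B^c\setminus A^c$, this rewrites as $G_j\setminus G_k=\{v\}$, and similarly $v\in F_i\setminus F_j$ becomes $v\in G_j\setminus G_i$.

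The second step is to recognize the resulting condition. The above gives that $I_\Delta$ has linear quotients with respect to $m_1,\dots,m_t$ if and only if for every $i<j$ there exist $k<j$ and $v\in G_j\setminus G_i$ with $G_j\setminus G_k=\{v\}$. To finish, I would invoke the standard equivalent reformulation of the shelling condition (in the sense of Bj\"orner--Wachs): an ordering $G_1,\dots,G_t$ of the facets of $\Delta^\vee$ is a shelling if and only if for every $i<j$ there are $k<j$ and $v\in G_j$ with $G_i\cap G_j\subseteq G_k\cap G_j=G_j\setminus\{v\}$. The containment on the left is automatic once $v\notin G_i$ (i.e. $v\in G_j\setminus G_i$), so the two conditions coincide termwise, and the same ordering of generators/facets witnesses both properties.

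The main obstacle is essentially bookkeeping: I must be scrupulous with complementation in $[n]$ and check that ``generated by variables'' really collapses onto the single-vertex symmetric-difference condition that characterises shellings via the Bj\"orner--Wachs restriction maps. One also has to be careful that in the non-squarefree-free-of-repeated-degrees setting one is working with the non-pure notion of shellability, since linear quotients does not force the generators of $I_\Delta$ to have a common degree and hence $\Delta^\vee$ need not be pure. Once this set-theoretic dictionary is laid out, no homological or topological input is required --- the theorem is a combinatorial repackaging of the colon-ideal calculation of Lemma \ref{colon_structure}.
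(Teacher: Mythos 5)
The paper does not prove this statement at all: it is imported as a black-box citation of Herzog--Hibi, Proposition 8.2.5. Your proposal supplies a correct, self-contained proof, and it is in fact the standard argument behind the cited result: the bijection $x^{F}\leftrightarrow [n]\setminus F$ between minimal generators of $I_\Delta$ and facets of $\Delta^\vee$, the computation of the colon ideal via Lemma \ref{colon_structure}, the observation that ``generated by variables'' means each $x^{F_i\setminus F_j}$ is divisible by a variable $x_v$ lying in the colon ideal, and the identification $F_k\setminus F_j=\{v\}\iff G_j\setminus G_k=\{v\}$ are all correct, as is your use of minimality of $G(I_\Delta)$ to rule out $F_k\subseteq F_j$. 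The resulting condition is exactly the Bj\"orner--Wachs single-element reformulation of a (nonpure) shelling, so the same ordering witnesses both properties.

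Two small points to make explicit if you write this up. First, as you already flag, the shellability here must be the nonpure notion of Bj\"orner--Wachs; note that the paper's own Preliminaries only define shellability for pure complexes, so you would need to state the nonpure definition (or its standard reformulation) before invoking it. Second, when checking that your condition really is equivalent to the shelling condition, you should say a word about why purity of $2^{G_j}\cap\bigl(\bigcup_{l<j}2^{G_l}\bigr)$ in dimension $\dim(G_j)-1$ follows: every face of that intersection lies in some $G_i\cap G_j$ with $i<j$, and your condition places $G_i\cap G_j$ inside the codimension-one face $G_j\setminus\{v\}=G_k\cap G_j$, which also handles nonemptiness. These are exactly the bookkeeping items you anticipate; nothing in the argument fails.
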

By Lemma \ref{stanley-reisner correspondence}, every squarefree monomial ideal is of the form $I_\Delta$ for some simplicial complex $\Delta$, hence the above theorem is a characterization for all squarefree monomial ideals with linear quotients.

\begin{remark}
\label{linear_quotients_alexander_dual_with_degrees}
    We can give a precise formulation with degrees in the pure case. Namely:
    Let $\Delta$ be a simplicial complex on $n$ vertices. Then $I_\Delta$ is generated in degree $q$ and has linear quotients if and only if $\Delta ^{\vee}$ is pure of dimension $n-q-1$ and shellable.
\end{remark}

The characterization we give in terms of the lcm-lattice will be true for monomial ideals whose generators are all of the same degree, but can be formulated without the assumption of them being squarefree (and without requiring polarization in the definition), which is somewhat natural. 
We start by introducing a notion that will be useful to us:

\begin{definition}
\label{degree_graded_definition}
    We say that an lcm-lattice $L(I)$ is \textbf{degree-graded}  if $\textrm{for all } 1\neq a,b\in L, \ a\prec b \Rightarrow deg(b)=deg(a)+1$ . We say that it is \textbf{d-degree graded} if it is degree graded and the atoms of $L$ are all of degree $d$ (as monomials).
\end{definition}

Bj\"orner and Wachs showed the following:

\begin{theorem}[\cite{bjorner1983lexicographically}, Theorem 5.1]
    \label{total_semimodularity_rao}
    A graded poset $P$  is totally semimodular if and only if for every interval $[x,y]\subseteq P$, every ordering of its atoms is a recursive atom ordering.
\end{theorem}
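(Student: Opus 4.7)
The plan is to prove the two directions separately. The forward direction proceeds by induction on the length of the interval, while the backward direction follows directly by applying the recursive atom ordering hypothesis to a carefully chosen ordering.

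For the forward direction, assume $P$ is totally semimodular. Since total semimodularity is inherited by every interval, it suffices to prove that any totally semimodular graded poset has the property that every ordering of its atoms is a RAO, which I would establish by induction on the length. The base case (length $\leq 1$) is immediate: $\bar{P} = \emptyset$ and the first clause of the RAO definition applies. For the inductive step, I would fix an arbitrary ordering $a_1, \ldots, a_t$ of the atoms of $P$ and verify conditions (i) and (ii). Condition (i) is handled by the induction hypothesis applied to each $[a_j, \hat{1}]$: these intervals are totally semimodular of strictly smaller length, so every ordering of their atoms is a RAO; in particular, one may choose an ordering of the atoms of $[a_j, \hat{1}]$ where those covering some $a_i$ with $i < j$ appear first. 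For condition (ii), given $i < j$, I would set $k := i$; for any upper bound $y'$ of $a_i$ and $a_j$, semimodularity of the sub-interval $[\hat{0}, y']$ (which holds by total semimodularity) supplies an element $z \in [\hat{0}, y']$ with $a_i, a_j \prec z$, whence $z \leq y'$ as required.

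For the backward direction, assume that for every interval of $P$, every ordering of its atoms is a RAO. To show $P$ is totally semimodular, fix an interval $[x, y] \subseteq P$ and suppose $c \prec a, b$ for some $c, a, b \in [x, y]$. Apply the hypothesis to the interval $[c, y]$ with an ordering of its atoms satisfying $a_1 = a$ and $a_2 = b$: by RAO condition (ii) with $i = 1, j = 2$, there exists $k < 2$, namely $k = 1$, and for the upper bound $y$ itself an element $z$ with $a, b \prec z \leq y$. Since $z \geq a \geq c \geq x$, we have $z \in [x, y]$, establishing semimodularity of $[x, y]$.

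The main conceptual observation is that the semimodularity axiom is essentially the special case $i = 1, j = 2$ of RAO condition (ii) applied to an ordering beginning with the two atoms in question, so the backward direction requires no induction at all. The main technical subtlety in the forward direction is that the index $k$ in condition (ii) must be chosen independently of the upper bound $y'$, but this is resolved by taking $k := i$ once and producing a fresh witness $z$ for each $y'$ via the semimodularity of the correspondingly shrunk sub-interval $[\hat{0}, y']$.
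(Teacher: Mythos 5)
The paper cites this result from Bj\"orner--Wachs (Theorem 5.1 of \cite{bjorner1983lexicographically}) without reproducing a proof, so there is no in-paper argument to compare against. Your proof is correct and is essentially a clean reconstruction of the original Bj\"orner--Wachs argument: the forward direction by induction on length, with condition (i) discharged by the inductive hypothesis on $[a_j,\hat 1]$ and condition (ii) discharged by taking $k=i$ and invoking semimodularity of $[\hat 0, y']$; the backward direction by noting that condition (ii) with $i=1$, $j=2$ (forcing $k=1$) applied to $[c,y]$ with an ordering that puts the two covers of $c$ first is exactly the semimodularity axiom. Two small points worth making explicit in a writeup: first, that intervals of a graded poset are again graded and that cover relations in an interval agree with those in the ambient poset (so the $\prec$ and the atoms you manipulate are unambiguous); second, in the backward direction you should note that when $a\ne b$ the set $\overline{[c,y]}$ is nonempty, so the nontrivial clause of the RAO definition applies and condition (ii) is available. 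With those remarks included, the proof is complete.
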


Now a lemma regarding semimodularity in degree graded lcm-lattices:

\begin{lemma}
    \label{d_degree_graded_gives_total_semimodularity}
    Let $I$  be a squarefree monomial ideal such that $L=L(I)$  is $d$-degree graded. Then every interval $[x,y]$, $x\neq 1$ is semimodular. In particular, every such interval is totally semimodular.
\end{lemma}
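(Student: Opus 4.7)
The plan is to exploit the rigid structure of cover relations forced by the $d$-degree graded hypothesis once one stays above the bottom element. The single driving observation is: if $u \prec v$ in $L$ with $u \neq 1$, then $\textrm{deg}(v) = \textrm{deg}(u) + 1$, and since $I$ is squarefree, $u$ and $v$ are squarefree monomials with $u \mid v$, so $v = u \cdot x_i$ for a unique variable $x_i$ not in the support of $u$. I would open the proof by recording this remark, since everything that follows is essentially counting degrees against it.

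Next I would fix an interval $[x,y]$ with $x \neq 1$ and suppose we have $c \in [x,y]$ with $c \prec a$ and $c \prec b$ for distinct $a, b \in [x,y]$. Because $c \geq x \neq 1$, the observation applies at $c$, so $a = c \cdot x_i$ and $b = c \cdot x_j$ for distinct variables $x_i \neq x_j$. I would then set
\[
d := a \vee b = \textrm{lcm}(a,b) = c \cdot x_i \cdot x_j,
\]
which lies in $L$ because $L$ is closed under joins, and satisfies $d \leq y$ since $y$ is an upper bound for $a$ and $b$. So $d \in [x,y]$, and it remains to verify that $a \prec d$ and $b \prec d$ (covers in $[x,y]$ are the same as covers in $L$ between elements of $[x,y]$).

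The verification is again pure degree counting. For any $e \in L$ with $a < e \leq d$, choose a saturated chain $a = c_0 \prec c_1 \prec \cdots \prec c_k = e$. Every $c_{i-1}$ lies above $a \neq 1$, so each cover in the chain increases the degree by $1$, giving $\textrm{deg}(e) = \textrm{deg}(a) + k$. But $\textrm{deg}(e) \leq \textrm{deg}(d) = \textrm{deg}(a) + 1$, so $k = 1$ and $\textrm{deg}(e) = \textrm{deg}(d)$; combined with $e \mid d$, this forces $e = d$. Hence $a \prec d$, and symmetrically $b \prec d$, proving semimodularity of $[x,y]$.

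The "in particular" clause is then immediate: any closed subinterval $[x',y'] \subseteq [x,y]$ still satisfies $x' \geq x \neq 1$, so the same argument applies verbatim and each such subinterval is semimodular, giving total semimodularity. I do not expect a real obstacle here; the only subtlety worth flagging in the writeup is why the hypothesis $x \neq 1$ cannot be dropped: a join of two atoms of $L$ may have degree far exceeding $d+1$, so covers at the very bottom of $L$ need not align in the way the semimodularity definition requires, and it is exactly excluding the minimum that makes the degree bookkeeping tight.
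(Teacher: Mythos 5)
Your proof is correct and follows essentially the same line as the paper's: use the $d$-degree graded hypothesis (valid above $x\neq 1$) to write $a = c\cdot x_i$, $b = c\cdot x_j$ with distinct variables, observe $a\vee b = c\cdot x_i x_j$ lies in the interval, and conclude the cover relations by degree counting. The only cosmetic differences are that the paper phrases semimodularity via $a\wedge b \prec a,b$ (equivalent to your $c\prec a,b$ in a lattice) and leaves the "by degree considerations" step implicit, whereas you spell out the saturated-chain argument.
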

\begin{proof}
    Let $a,b \in [x,y]$ such that $a\wedge b \prec a,b$. Since $L$ is $d$-degree graded, we have $deg(a \wedge b)=deg(a)-1=deg(b)-1$. This means that $a,b$ are of the form $a=(a \wedge b)x_a, \ b=(a\wedge b)x_b$ for some variables $x_a\neq x_b$. Thus $a\vee b = (a\wedge b)x_ax_b$, giving that $a,b\prec a\vee b$ by degree considerations. Clearly also $a\vee b \le y$.
\end{proof}

A corollary from Theorem \ref{total_semimodularity_rao} and Lemma \ref{d_degree_graded_gives_total_semimodularity} would then be:
\begin{corollary}
\label{only_condition_2}
Let $I$  be a squarefree monomial ideal such that $L=L(I)$ is $d$-degree graded. Then if $a_1,\dots ,a_t$ is some ordering of the atoms of $L$  such that condition (\RNum{2}) of \ref{rao_definition} holds, then it is a RAO.
\end{corollary}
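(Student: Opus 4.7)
The plan is straightforward: combine Lemma \ref{d_degree_graded_gives_total_semimodularity} with Theorem \ref{total_semimodularity_rao} to show that condition (i) of the RAO definition is automatic under the degree-graded hypothesis, leaving only condition (ii) to verify.

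First, I would note that for any atom $a_j$ of $L$, the interval $[a_j, \hat 1]$ has its bottom element $a_j \neq 1$. By Lemma \ref{d_degree_graded_gives_total_semimodularity}, this interval is therefore totally semimodular. Applying Theorem \ref{total_semimodularity_rao} to $[a_j,\hat 1]$, we conclude that \emph{every} ordering of the atoms of $[a_j, \hat 1]$ is a recursive atom ordering of $[a_j, \hat 1]$.

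In particular, given the ordering $a_1, \dots, a_t$ of the atoms of $L$, we are free to reorder the atoms of $[a_j, \hat 1]$ (i.e.\ the elements of $L$ covering $a_j$) so that those which cover some $a_i$ with $i < j$ appear before those that do not. By the above, this ordering is still a RAO of $[a_j, \hat 1]$, which is exactly what condition (i) of Definition \ref{rao_definition} demands. Since condition (ii) holds by hypothesis, the ordering $a_1, \dots, a_t$ satisfies both conditions and is therefore a recursive atom ordering of $L$.

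The only subtlety — and it is a very mild one — is that Theorem \ref{total_semimodularity_rao} requires total semimodularity of every subinterval of $[a_j, \hat 1]$, not just of $[a_j,\hat 1]$ itself. But this is immediate from Lemma \ref{d_degree_graded_gives_total_semimodularity} applied to each such subinterval $[x,y] \subseteq [a_j,\hat 1]$, since $x \geq a_j \neq 1$ forces $x \neq 1$.
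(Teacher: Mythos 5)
Your proposal is correct and follows essentially the same path as the paper's proof: apply Lemma \ref{d_degree_graded_gives_total_semimodularity} to get total semimodularity of each $[a_j, m_L]$, invoke Theorem \ref{total_semimodularity_rao} to conclude that every ordering of the atoms of $[a_j, m_L]$ is a RAO (so condition (i) can always be satisfied), and combine with the assumed condition (ii). The ``subtlety'' you flag at the end is already built into the statement of Lemma \ref{d_degree_graded_gives_total_semimodularity}, which asserts total (not merely plain) semimodularity of every interval $[x,y]$ with $x \neq 1$, so it needs no separate handling.
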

\begin{proof}
By Lemma \ref{d_degree_graded_gives_total_semimodularity}, every interval of the form $[a_j ,m_L]$ is totally semimodular $\textrm{for all } 1\leq j\leq t$. Thus by Theorem \ref{total_semimodularity_rao} every ordering of the atoms of $[a_j,m_L]$  is a RAO. Thus condition (\RNum{1}) of \ref{rao_definition} holds for any ordering of the atoms of $L$ (as in particular we can choose one where the atoms that come first are those that cover some $a_i, \ i<j$). Since condition (\RNum{2}) holds by assumption for the ordering at hand, we have that it is a RAO as well.
\end{proof}

Another convenient terminology will be the following:
\begin{definition}
    Let $I\triangleleft k[x_1, \dots ,x_n]$  be a monomial ideal. A \textbf{linear divisor} of $I$ is a monomial $m$ such that the colon ideal $(I:m)$ is generated by variables.
\end{definition}

We now proceed to proving our main result. First, in the squarefree case.
\begin{theorem}
    \label{linear_quotients_cl_shellability_squarefree}
    Let $I\triangleleft k[x_1, \dots ,x_n]$ be a squarefree monomial ideal generated by monomials of degree $d$, $L=L(I)$  its lcm-lattice. The following are equivalent:
    \begin{enumerate}
        \item $L$  is $d$-degree graded and CL-shellable.
        \item $I$ has linear quotients.
    \end{enumerate}
\end{theorem}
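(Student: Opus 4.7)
The plan is to identify atoms of $L$ with the minimal generators $G(I)$, so that an ordering of one is an ordering of the other, and then to translate condition (ii) of Definition \ref{rao_definition} directly into the statement that the relevant colon ideal is generated by variables, and vice-versa. The key technical bridge, valid whenever $L$ is $d$-degree graded, is the following observation: if $m_k$ and $m_j$ are distinct atoms of $L$ and $z\in L$ covers both, then degree counting forces $\deg(\gcd(m_k,m_j))=d-1$, so $x:=m_k/\gcd(m_k,m_j)$ is a single variable with $x\nmid m_j$, and $z=x\cdot m_j=\mathrm{lcm}(m_k,m_j)$.

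For the direction (2) $\Rightarrow$ (1), I would first use that linear quotients imply a $d$-linear resolution (Theorem \ref{linear_quotients_stronger_than_linear_resolution}) together with Lemma \ref{linear resolution implies degree graded} to conclude that $L$ is $d$-degree graded. Then I take the linear quotients ordering $m_1,\ldots,m_t$ as the atom ordering of $L$. By Corollary \ref{only_condition_2}, I am reduced to verifying condition (ii). Fix $i<j$; by Lemma \ref{colon_structure}, $m_i/\gcd(m_i,m_j)$ lies in the colon ideal $((m_1,\ldots,m_{j-1}):m_j)$, which is generated by variables, so some variable $x$ divides $m_i/\gcd(m_i,m_j)$, and this $x$ must coincide with $m_k/\gcd(m_k,m_j)$ for some $k<j$. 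Setting $z:=\mathrm{lcm}(m_k,m_j)=x\cdot m_j$, the bridge makes $z$ cover both $m_k$ and $m_j$; and for any $y\in L$ with $m_i\le y$ and $m_j\le y$, one has $x\mid m_i\mid y$, $m_j\mid y$, $x\nmid m_j$, so by squarefreeness $z\mid y$, which is exactly condition (ii).

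For the direction (1) $\Rightarrow$ (2), take the given recursive atom ordering $m_1,\ldots,m_t$ (which exists since CL-shellability is equivalent to admitting an RAO) as the ordering of $G(I)$. Fix $j$ and $i<j$; by Lemma \ref{colon_structure}, I need only produce a variable in the colon ideal that divides $m_i/\gcd(m_i,m_j)$. Applying condition (ii) with $y:=\mathrm{lcm}(m_i,m_j)$ yields $k<j$ and $z\in L$ with $m_k,m_j\prec z\le y$. By the bridge, $z=x\cdot m_j$ for a variable $x\nmid m_j$, and $x=m_k/\gcd(m_k,m_j)$ lies in the colon ideal. From $x\mid z\mid\mathrm{lcm}(m_i,m_j)$ and $x\nmid m_j$, squarefreeness forces $x\mid m_i$, hence $x\mid m_i/\gcd(m_i,m_j)$, as required.

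The main obstacle I anticipate is just keeping the dictionary between condition (ii) of the RAO and the colon-ideal-generation condition for linear quotients completely watertight, via the bridge lemma above; once the $d$-degree graded hypothesis is in hand in both directions (for the forward direction this requires passing through the linear resolution result), both translations become essentially mechanical, and no genuinely new combinatorial input is needed beyond Corollary \ref{only_condition_2}.
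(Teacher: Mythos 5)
Your proof is correct and follows essentially the same route as the paper: both directions hinge on the dictionary between condition (ii) of the recursive atom ordering and the colon ideal being generated by variables, with the reduction to condition (ii) handled by Corollary~\ref{only_condition_2}, and both rely on the degree-counting fact that in a $d$-degree graded $L$ a join of two atoms covering both of them equals $x\cdot m_j$ for a variable $x$. The only deviation is that for $d$-degree gradedness in the $(2)\Rightarrow(1)$ direction you invoke Theorem~\ref{linear_quotients_stronger_than_linear_resolution} plus Lemma~\ref{linear resolution implies degree graded}, whereas the paper gives a self-contained case analysis avoiding the detour through linear resolutions; the paper itself notes that the shortcut you take is also available.
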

\begin{proof}
    Suppose that I has linear quotients. Let $a_{1},...,a_{t}$ be an ordering of its generators such that $\forall 1\leq i \leq t-1$, $a_{i+1}$ is a linear divisor of $\left(a_{1},...,a_{i}\right)$. To show that $L$ is $d$-degree graded one can use Lemma \ref{linear resolution implies degree graded}  and 
 Theorem \ref{linear_quotients_stronger_than_linear_resolution}, but let us show a more direct proof without going through linear resolutions:

Let $m_{1}<m_{2}$ in $L$ such that $deg\left(m_{2}\right)-deg\left(m_{1}\right)\geq2$. We need to show the existence of an element $m\in L$ s.t. $m_{1}<m<m_{2}$ in $L$. Let $m_{1}=a_{i_{1}}\vee...\vee a_{i_{c}}$ be an exhaustive (i.e., including all atoms below $m_1$) representation of $m_{1}$ as a join of atoms such that $1\leq i_{1}<...<i_{c}\leq t$. An exhaustive representation as a join of atoms for $m_{2}$ would thus be $m_{2}=a_{i_{1}}\vee...\vee a_{i_{c}}\vee a_{j_{1}}\vee...\vee a_{j_{b}}$ 
since $m_{1}\mid m_{2}$. Note that $b>0$ because $m_{1}$ and $m_{2}$ are not equal, and since the representation for $m_{1}$ was exhaustive we have that $a_{j_{l}}$ is divisible by a variable that none of the $a_{i_{q}}$ are divisible by $\forall1\leq l\leq b,1\leq q\leq c$ or else $a_{j_l}$ would already appear in the representation of $m_{1}$. W.l.o.g we assume $j_{1}<...<j_{b}$. 

\textbf{Case $i_{1}<j_{1}$}. Since $a_{j_{1}}$ is a linear divisor of $\left(a_{1},...,a_{j_{1}-1}\right), \exists k<j_{1}$ such that 
$$x_{q}=\frac{a_{k}}{gcd\left(a_{k},a_{j_{1}}\right)}\mid \frac{a_{i_{1}}}{gcd\left(a_{i_{1}},a_{j_{1}}\right)}$$ for some variable $x_{q}$. This gives that $$a_{k}=\frac{a_{j_{1}}x_{q}}{x_{p}}$$ for some variable $x_{p}$. We now notice that if $deg\left(m_{1}\vee a_{j_{1}}\right)>deg\left(m_{1}\right)+1, a_{j_{1}}$ is divisible by at least two variables such that none of the $a_{i_{l}}$'s are divisible by them. This means that $a_{k}$ is divisible by a variable that $m_{1}$ is not divisible by since we divided out at most one such variable. Note that $a_{k}\mid m_{2}$, because both $a_{j_{1}}$ and 
$x_{q}\mid a_{i_{1}}$ do. Thus we get that $a_{k}$ is among the $a_{j_{l}}$'s in that case. But that is impossible because $k<j_{1}$ and $j_{1}$ was the minimal index among these elements. We conclude that $deg\left(m_{1}\vee a_{j_{1}}\right)=deg\left(m_{1}\right)+1$, hence $m_{1}<m_{1}\vee a_{j_{1}}<m_{2}$ (the second inequality is strict because of the assumption that $deg(m_2)>deg(m_1)+1$).

\textbf{Case $j_{1}<i_{1}$}. Similarly we get that $\exists k<i_{1}$ such that $$x_{q}=\frac{a_{k}}{gcd\left(a_{k},a_{i_{1}}\right)}\mid \frac{a_{j_{1}}}{gcd\left(a_{j_{1}},a_{i_{1}}\right)}$$ so $$a_{k}=\frac{a_{i_{1}}x_{q}}{x_{p}}$$ If $x_{q}\mid m_{1}$ then $a_{k}\mid m_{1}$ hence $a_k$ is among the $a_{i_{l}}$'s, contradicting minimality of $i_{1}$. Thus $m_{1}<m_{1}\vee x_{q}<m_{2}$ where the second inequality is (a) True because $a_{k}\mid m_{2}$ and (b) strict by degree considerations, by similar reasoning to the arguments in the previous case. This proves that $L$ is $d$-degree graded.

We now prove that $L$ is CL-shellable by constructing a RAO. By Corollary \ref{only_condition_2} it suffices to show that for some ordering condition (\RNum{2}) holds. 

Indeed, order the atoms in the same order as the linear quotients ordering, and let $i<j$. Like before, $\exists k<j$ such that $$a_{k}=\frac{a_{j}x_{q}}{x_{p}}$$ and $x_{q}\mid a_{i}$ while $x_{q}\nmid a_{j}$. Thus $a_{k}\mid (a_{i}\vee a_{j})$, hence $$a_{k},a_{j}\prec a_{k}\vee a_{j}\leq a_{i}\vee a_{j}$$ as desired (the cover relation is by degree considerations). 

Conversely, suppose that $L$ is d-degree graded and CL-shellable. We show that an ordering that gives a RAO of the atoms of $L$ is also a linear quotients ordering.

Indeed, let $a_{1},...,a_{t}$ be a RAO for $L$, let $2\leq j\leq t$ and $i<j$. By condition (\RNum{2}) we have that $\exists k<j$ such that $a_{k},a_{j}\prec a_{k}\vee a_{j}\leq a_{i}\vee a_{j}$. Since $L$ is $d$-degree graded, we have that $deg\left(a_{k}\vee a_{j}\right)=d+1$. Thus $$\frac{a_{k}}{gcd\left(a_{k},a_{j}\right)}=x_{q}$$ for some variable $x_{q}$. But since $a_{k}\mid (a_{i}\vee a_{j})$ and $x_{q}\nmid a_{j}$ we must have $x_{q}\mid a_{i}$. Thus $$x_{q}\mid\frac{a_{i}}{gcd\left(a_{i},a_{j}\right)}$$ which shows precisely that $a_{j}$ is a linear divisor of $\left(a_{1},...,a_{j-1}\right)$ and we are done.

\end{proof}
Theorem~\ref{linear_quotients_cl_shellability_squarefree} extends to monomial ideals that are not necessarily squarefree. We first provide a proof for a probably well known fact about polarization, but I haven't managed to find a reference for it:

\begin{lemma}
    \label{linear_quotients_polarization}
    Let $I$  be a monomial ideal, $J$  its polarization. Then $I$  has linear quotients if and only if $J$ has linear quotients.
\end{lemma}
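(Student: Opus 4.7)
The plan is to exhibit a canonical correspondence between linear quotients orderings of $I$ and linear quotients orderings of $J$. The polarization map gives a bijection $u_i \leftrightarrow v_i$ between $G(I)$ and $G(J)$, so any total ordering on one side transfers to the other. I will prove the sharper claim: the ordering $u_1,\dots,u_m$ is a linear quotients ordering for $I$ if and only if $v_1,\dots,v_m$ is one for $J$. By Lemma \ref{colon_structure}, the relevant colon ideals are generated by the quotients $u_k/\gcd(u_k,u_i)$ and $v_k/\gcd(v_k,v_i)$ respectively, and each is generated by variables if and only if, for every $k<i$, some variable dividing the corresponding quotient is itself witnessed as being in the colon ideal by some index $s<i$ with $u_s \mid x u_i$ (resp.\ $v_s \mid y v_i$).

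With the notation of Definition \ref{polarization_def}, direct computation gives
\[
u_k/\gcd(u_k,u_i) = \prod_{j:\,a_{k,j}>a_{i,j}} x_j^{\,a_{k,j}-a_{i,j}}, \qquad v_k/\gcd(v_k,v_i) = \prod_{j:\,a_{k,j}>a_{i,j}} \prod_{\ell=a_{i,j}+1}^{a_{k,j}} x_{j\ell}.
\]
For the forward direction, starting from a variable $x_j$ dividing $u_k/\gcd(u_k,u_i)$ together with an $s<i$ satisfying $u_s \mid x_j u_i$, I lift to $x_{j,\,a_{i,j}+1}$. Divisibility into $v_k/\gcd(v_k,v_i)$ is immediate from the displayed formula, and the inequalities $a_{s,j'}\le a_{i,j'}$ for $j'\ne j$ and $a_{s,j}\le a_{i,j}+1$ coming from $u_s\mid x_j u_i$ translate, block by block, into $v_s \mid x_{j,\,a_{i,j}+1}\, v_i$.

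For the converse, given a variable $x_{j,\ell}$ dividing $v_k/\gcd(v_k,v_i)$ and an $s<i$ with $v_s\mid x_{j,\ell}\, v_i$, I project to $x_j$. The formula forces $a_{i,j}<\ell\le a_{k,j}$, so $x_j$ divides $u_k/\gcd(u_k,u_i)$. The main obstacle is the bookkeeping in the $j$-th block of the polarized alphabet, since there is no reason a priori for $\ell$ to equal $a_{i,j}+1$. The relation $v_s\mid x_{j,\ell}\,v_i$ yields the multiset inclusion $\{x_{j,1},\dots,x_{j,a_{s,j}}\}\subseteq \{x_{j,1},\dots,x_{j,a_{i,j}},x_{j,\ell}\}$, and the possibility $\ell>a_{i,j}+1$ is ruled out as follows: either $a_{s,j}\le a_{i,j}$, or the variable $x_{j,\ell}$ must occupy the top slot of the left-hand block, forcing $a_{s,j}=\ell=a_{i,j}+1$. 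In either case $a_{s,j}\le a_{i,j}+1$, and combined with $a_{s,j'}\le a_{i,j'}$ for $j'\ne j$ this yields $u_s\mid x_j u_i$, completing the proof. The delicate case analysis in this last paragraph is where all the real work lies; the forward direction is mechanical by comparison.
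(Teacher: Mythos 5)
Your proof is correct and follows essentially the same route as the paper's: transfer the ordering via the bijection $u_i \leftrightarrow v_i$, compute the colon quotients block by block, and lift or project witness variables between the original and polarized alphabets. The only inefficiency is in the converse, where the case $\ell > a_{i,j}+1$ is in fact vacuous: the witness $s$ necessarily has $v_s/\gcd(v_s,v_i)=x_{j,\ell}$, and because $v_s$, $v_i$ are polarizations (hence prefix monomials in each block), a single-variable quotient already forces $\ell=a_{i,j}+1$.
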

\begin{proof}
Preserving the notations from \ref{polarization_def}, assume $I$  has linear quotients ordering and let $u_1,\dots ,u_m$  be an appropriate ordering of its generators. We show that the corresponding ordering $v_1,\dots ,v_m$  is also a linear quotients ordering by showing $v_i$ is a linear divisor of $(v_1,\dots ,v_{i-1}) \ \forall 2\leq i\leq m$.

For a monomial $t\in S$, denote by $deg_{x_i}(t)$  the largest integer $p$ such that $x_i^p\mid t$. In our case we have $deg_{x_q}(u_i)=a_{i,q} \forall i,q$. We will sometimes use $a_{i,q}$ and $deg_{x_q}(u_i)$ interchangeably in this proof.

Let $2\leq i \leq m$. 
From Lemma \ref{colon_structure} we know that 
$$((u_1,...,u_{i-1}):u_i)=\left(\left\{\frac{u_j}{gcd(u_j,u_i)}\right\}_{j=1}^{i-1}\right),$$ 
and by the definition of the ordering this ideal is generated by variables. So $\forall j<i$  $\exists k<i$  such that $\frac{u_k}{gcd(u_i,u_k)}=x_l$  for some $1\leq l\leq n$  that satisfies $x_l \mid \frac{u_j}{gcd(u_j,u_i)}$. Hence, $deg_{x_l}(u_k)=deg_{x_l}(u_i)+1$. 
For all $p\neq l$  $x_p\nmid \frac{u_k}{gcd(u_k,u_i)}$, so $\frac{v_k}{gcd(v_k,v_i)}=x_{l(a_{i,l}+1)}$. Because $x_l \mid \frac{u_j}{gcd(u_j,u_i)}$ we get $a_{j,l}\geq a_{i,l}+1=a_{k,l}$, hence 
$$\frac{v_k}{gcd(v_k,v_i)}=x_{l(a_{i,l}+1)}\mid \frac{v_j}{gcd(v_j,v_i)}$$
which means that $((v_1,...,v_{i-1}):v_i)$ is generated by variables as well, hence a linear quotients ordering.

Conversely, assume that $\{v_1,\dots,v_m\}$ is a linear quotients ordering. Let $1<i\leq m$ as before, then $((v_1,...,v_{i-1}):v_i)$  is generated by variables and we show that $((u_1,...,u_{i-1}):u_i)$  is as well, finishing the proof. Indeed, taking $j<i$  there exists $k<i$  such that 
$$\frac{v_k}{gcd(v_k,v_i)}=x_{l(a_{i,l}+1)}\mid \frac{v_i}{gcd(v_j,v_i)}$$ 
for some $1\leq l\leq n$ . Hence $deg_{x_q}(u_k)\leq deg_{x_q}(u_i)$  $\forall q\neq l$  and $deg_{x_l}(u_k)= deg_{x_l}(u_i)+1$. By the same reasoning, $deg_{x_l}(u_j)>deg_{x_l}(u_i)$ which gives us 
$$\frac{u_k}{gcd(u_k,u_i)}=x_{l}\mid \frac{u_i}{gcd(u_j,u_i)}$$ as desired.

\end{proof}

An easy lemma for lcm-lattices is:
\begin{lemma}
    \label{polarization_lattices_cl_shellability}
    Let $I$ be a monomial ideal, $J$  its polarization, $L(I)
    $ and $L(J)$ the corresponding lcm-lattices. We then have:
    \begin{enumerate}
        \item $L(I)$ is $d$-degree graded iff $L(J)$ is $d$-degree graded.
        \item $L(I)$ is CL-shellable iff $L(J)$ is CL-shellable
    \end{enumerate}
\end{lemma}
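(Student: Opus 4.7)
The plan is to follow exactly the pattern used in the proof of Lemma \ref{polarization_lattices_cohen_macaulay}: both assertions will fall out of the observation that the isomorphism $L(I)\cong L(J)$ supplied by Lemma \ref{polarization_lattices_iso} is in fact an isomorphism of \emph{graded} atomic lattices, so it transports both the degree function and every purely poset-theoretic property, including CL-shellability.

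First I would address part 1. The isomorphism $\phi\colon L(I)\to L(J)$ sends an element $m=\mathrm{lcm}(u_{i_1},\dots,u_{i_s})$ to $\phi(m)=\mathrm{lcm}(v_{i_1},\dots,v_{i_s})$. Using the notation of Definition \ref{polarization_def}, $\deg(u_i)=\sum_j a_{i,j}=\deg(v_i)$, and more generally the exponent of $x_j$ in $m$ equals $\max_l a_{i_l,j}$, while the $x_{j,1},\dots,x_{j,a_j}$ blocks are disjoint for distinct $j$, so $\deg \phi(m)=\sum_j \max_l a_{i_l,j}=\deg m$. Thus $\phi$ preserves degrees, and in particular the atoms of $L(I)$ all have degree $d$ iff those of $L(J)$ do, and any cover relation in $L(I)$ increases degree by $1$ iff the corresponding cover in $L(J)$ does. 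This gives equivalence~1.

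For part 2, CL-shellability is defined purely as the existence of a chain-edge labeling of the poset satisfying certain axioms on maximal chains of intervals, so it is a poset invariant. Concretely, given a CL-labeling $\lambda$ of $L(I)$, transferring along $\phi$ yields a chain-edge labeling of $L(J)$ satisfying the same axioms (since $\phi$ preserves covers, intervals, and maximal chains), and vice versa. Therefore $L(I)$ is CL-shellable iff $L(J)$ is.

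There is no real obstacle here; the only thing to verify carefully is that $\phi$ from Lemma \ref{polarization_lattices_iso} genuinely preserves degrees of monomials, which is immediate from the construction of polarization. I would present the proof very briefly, in the style of the proof of Lemma \ref{polarization_lattices_cohen_macaulay}: the lattice isomorphism of Lemma \ref{polarization_lattices_iso} preserves degrees, giving equivalence~1, and is a poset isomorphism, giving equivalence~2.
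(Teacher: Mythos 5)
Your proposal is correct and takes essentially the same approach as the paper: the isomorphism of Lemma \ref{polarization_lattices_iso} is a poset isomorphism that preserves degrees, giving part~1, and CL-shellability is a poset invariant, giving part~2. The only difference is that you spell out the degree-preservation computation that the paper leaves implicit.
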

\begin{proof}
    Any isomorphism of posets between $L(I)$ and $L(J)$ gives equivalence 2. the isomorphism from Lemma \ref{polarization_lattices_iso} preserves degrees, ensuring equivalence 1.
\end{proof}
We can now prove Theorem \ref{linear_quotients_cl_shellability}:

\begin{proof}
    This follows from Theorem \ref{linear_quotients_cl_shellability_squarefree}, Lemma \ref{linear_quotients_polarization} and Lemma \ref{polarization_lattices_cl_shellability}:
    
\begin{tikzcd}
I\text{ has linear quotients} \arrow[d, Leftrightarrow, "\ref{linear_quotients_polarization}"] & & L(I)\text{ is }d-\text{degree graded and CL-shellable} \arrow[d, Leftrightarrow, "\ref{polarization_lattices_cl_shellability}"] \\
J\text{ has linear quotients} \arrow[rr, Leftrightarrow, "\ref{linear_quotients_cl_shellability_squarefree}"] & & L(J)\text{ is }d-\text{degree graded and CL-shellable}
\end{tikzcd}

\end{proof}

\subsection{Cohen-Macaulayness and linear resolutions}
For linear resolutions, an important correspondence is the Eagon-Reiner theorem (\cite{EAGON1998265}, Theorem 3):

\begin{theorem}[Eagon-Reiner Theorem]
\label{linear_resolutions_cm_alexander_dual}
Let $k$  be a field, $\Delta$ a simplicial complex with $n$  vertices, $I_\Delta$ its Stanley-Reisner ideal. Then $I_\Delta$  has a linear resolution over $k$  if and only if $\Delta^{\vee}$  is Cohen-Macaulay over $k$.
\end{theorem}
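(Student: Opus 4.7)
The plan is to use Hochster's formula together with combinatorial Alexander duality to translate between the graded Betti numbers of $I_\Delta$ and the reduced homology of links in $\Delta^\vee$, and then match the $d$-linear condition on one side with the Cohen--Macaulay vanishing condition on the other.

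First, I would observe that both sides of the equivalence force a compatible kind of purity. A $d$-linear resolution forces all minimal generators of $I_\Delta$ to have degree $d$, so every minimal non-face of $\Delta$ has size $d$, so every facet of $\Delta^\vee$ has size $n-d$; hence $\Delta^\vee$ is pure of dimension $n-d-1$. Conversely, a Cohen--Macaulay $\Delta^\vee$ is pure, and the same chain of implications read backwards shows that $I_\Delta$ is generated in a single degree. Either way we may fix a common degree $d$ from the start.

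Next, I would invoke Hochster's formula
\[
\beta_{i,\sigma}(I_\Delta) \;=\; \dim_k \tilde{H}_{|\sigma|-i-2}(\Delta_\sigma;\, k),
\]
where $\Delta_\sigma=\{F\in\Delta : F\subseteq \sigma\}$, and then prove the elementary identity
\[
(\Delta_\sigma)^\vee \;=\; \operatorname{link}_{\Delta^\vee}\!\bigl([n]\setminus\sigma\bigr),
\]
with the Alexander dual on the left taken inside the vertex set $\sigma$; this is a direct unpacking of the definitions of $\Delta_\sigma$, $\Delta^\vee$, and $\operatorname{link}$. Combining this identity with combinatorial Alexander duality $\tilde{H}_j(\Gamma;k)\cong \tilde{H}_{|V|-j-3}(\Gamma^\vee;k)$ for a complex $\Gamma$ on vertex set $V$, I obtain
\[
\beta_{i,\sigma}(I_\Delta) \;=\; \dim_k \tilde{H}_{i-1}\!\bigl(\operatorname{link}_{\Delta^\vee}([n]\setminus\sigma);\, k\bigr).
\]

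Finally, I would compare the two conditions face by face. Having a $d$-linear resolution means $\beta_{i,\sigma}(I_\Delta)$ may be non-zero only when $|\sigma|=i+d$. Setting $F=[n]\setminus\sigma$, this says that $\tilde{H}_{i-1}(\operatorname{link}_{\Delta^\vee}F;k)$ may be non-zero only at $i-1 = n-d-1-|F| = \dim(\operatorname{link}_{\Delta^\vee}F)$. Since reduced homology strictly above the top dimension of a complex automatically vanishes, concentration of homology in the top dimension of every link is exactly the Cohen--Macaulay vanishing condition for $\Delta^\vee$. The main obstacle is the bookkeeping: one must carefully align the vertex sets in Hochster's formula, the Alexander dual (taken inside $\sigma$, not inside $[n]$), and combinatorial Alexander duality, so that the degree shifts fit exactly. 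Once the identity $(\Delta_\sigma)^\vee = \operatorname{link}_{\Delta^\vee}([n]\setminus\sigma)$ is in place, however, the equivalence drops out term by term.
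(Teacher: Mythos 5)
The paper does not prove this theorem; it is stated as a citation to Eagon and Reiner's original 1998 paper (Theorem 3 there), so there is no in-paper proof to compare against. Your proof is correct and is in fact essentially the original Eagon--Reiner argument: Hochster's formula for the multigraded Betti numbers of $I_\Delta$, combined with combinatorial Alexander duality via the identity $(\Delta_\sigma)^\vee = \operatorname{link}_{\Delta^\vee}([n]\setminus\sigma)$, yields the ``dual Hochster formula'' $\beta_{i,\sigma}(I_\Delta) = \dim_k \tilde{H}_{i-1}(\operatorname{link}_{\Delta^\vee}([n]\setminus\sigma);k)$, and then matching the condition ``$|\sigma| = i + d$'' against ``homology concentrated in top dimension'' gives the equivalence once purity is observed. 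The only bookkeeping you might spell out further is the convention for the void versus empty complex when $\sigma$ is a face of $\Delta$ (so that $[n]\setminus\sigma \notin \Delta^\vee$ and the link is void, matching $(\Delta_\sigma)^\vee$ having no faces), but this resolves consistently and does not affect the argument. Note also that this dual form of Hochster's formula is precisely what the paper uses later, via Lemma~\ref{correspondance between lattices} and Theorem~\ref{betti_lcm_lattice_connection}, to move between the lcm-lattice picture and the Alexander-dual picture.
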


\begin{remark}
\label{eagon_reiner_theorem_with_degrees}
    We can give a precise formulation with degrees in the pure case. Namely:
    Let $k$ be a field, $\Delta$ be a simplicial complex on $n$ vertices, $I_\Delta$ its Stanley-Reisner ideal. Then $I_\Delta$ has a $q$-linear resolution over $k$ if and only if $\Delta ^{\vee}$ is pure of dimension $n-q-1$ and Cohen-Macaulay.
\end{remark}

For completeness of this work, we stated in Theorem \ref{linear_resolutions_cm_lcm_lattice} a characterization in terms of the lcm-lattice and in the language of our main result \ref{linear_quotients_cl_shellability}, which is implicit in current literature. A slightly less general statement was given in \cite{phan2005minimal}. In Appendix \ref{appendix:A} we give a detailed proof, which is a reformulation and combination of the work of Phan.

\subsection{Connections between the lcm-lattice and the Alexander dual}

We now observe the connections we have between the lcm-lattice and the Alexander dual. 
\begin{corollary}
\label{alexander_dual_lcm_lattice}
    Let $k$ be a field, $\Delta$ be a simplicial complex on $n$  vertices, $I_\Delta$ its Stanley-Reisner ideal and $L(I_\Delta)$ its lcm-lattice. Then the following holds:
    \begin{enumerate}
        \item $L(I_\Delta)$ is $d$-degree graded and Cohen-Macaulay over $k$ if and only if $\Delta^{\vee}$ is pure of dimension $n-d-1$  and Cohen-Macaulay over $k$.
        \item $L(I_\Delta)$ is $d$-degree graded and CL-shellable if and only if $\Delta^{\vee}$ is pure of dimension $n-d-1$ and shellable.
    \end{enumerate}
\end{corollary}
\begin{proof}

    The first part of this theorem comes from Theorems \ref{linear_resolutions_cm_lcm_lattice}  and \ref{linear_resolutions_cm_alexander_dual}, combined with Remark \ref{eagon_reiner_theorem_with_degrees}
which explains the degrees and dimensions of $L(I_\Delta)$ and $\Delta$.   The second part of this theorem stems from Theorems \ref{linear_quotients_cl_shellability} and \ref{linear_quotients_alexander_dual}, combined with Remark \ref{linear_quotients_alexander_dual_with_degrees} that explains the degrees and dimensions of $L(I_\Delta)$ and $\Delta$. 

\end{proof}
\begin{remark}
Corollary \ref{alexander_dual_lcm_lattice} breaks some symmetry between the lcm-lattice and the Alexander dual. Being pure/degree graded and Cohen-Macaulay happens in one object if and only if it happens in the other. On the contrary, regarding shellability, we see that shellability of the Alexander dual corresponds to CL-shellability in the lcm-lattice, which is a different (stronger) property than shellability of the lcm-lattice in general. An example of a shellable but not CL-shellable poset is given in \cite{Vince1985ASP}. See further discussion in \ref{further_research}.
\end{remark}

\section{The lcm-lattice of edge ideals}

In this section we discuss the implications on an interesting subclass of monomial ideals inherently connected to simple graphs.

An interesting case of monomial ideals is the case of \textbf{edge ideals}. For a simple graph $G=(V,E)$, its \textbf{edge ideal} is the ideal $$I(G)=(x^e,e\in G)$$
where $x^e=x_ix_j$ for $e=\{i,j\}$. 
An \textbf{induced subgraph} of a graph $G=(V,E)$ is a subgraph $H=(V^{\prime},E^{\prime})$ such that $V^{\prime}\subseteq V$ and 
$e\in E' \subseteq \binom{V'}{2}$
if and only if $e\in E$. The \textbf{complement} of a graph $G=(V,E)$ is the graph $\bar{G} = (V,\bar{E})$ where $e\in \bar{E}$ if and only if $e\notin E$.
A graph is called \textbf{chordal} if it has no induced cycles of length $\geq 4$.
Fr\"oberg \cite{RalfFröberg1990} gave a full combinatorial characterization of graphs whose edge ideals have linear resolutions:
\begin{theorem}
\label{froberg_theorem}
Let $G$ be a simple graph. $I(G)$ has a linear resolution if and only if $\bar{G}$ is chordal.
\end{theorem}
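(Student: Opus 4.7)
The plan is to apply Theorem~\ref{linear_resolutions_cm_lcm_lattice} and reduce Fr\"oberg's statement to the equivalence: $\bar{G}$ is chordal if and only if $L := L(I(G))$ is $2$-degree graded and Cohen-Macaulay. First I would settle the $2$-degree gradedness part. The atoms of $L$ are the monomials $x_ix_j$ for edges $ij\in E(G)$, and a direct analysis of cover relations shows that a cover $a \prec b$ can have $\deg(b) - \deg(a) \geq 2$ only when $b = a \vee e$ for an atom $e$ whose two vertices are both disjoint from the support of $a$ and share no edge of $G$ with any vertex of $a$; in the extremal case $a$ is itself an atom and one obtains an induced $C_4$ in $\bar G$ on the four vertices involved. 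Conversely, an induced $C_4$ in $\bar{G}$ on $\{i,k,j,\ell\}$ produces disjoint atoms $x_ix_j, x_kx_\ell \in L$ whose lcm covers each of them with a degree jump of two. Hence $L$ is $2$-degree graded iff $\bar G$ is $C_4$-free (a necessary condition for chordality).

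For the chordal-implies-Cohen-Macaulay direction, I would induct on $|V(G)|$ using Dirac's theorem: every chordal graph admits a simplicial vertex $v$ whose $\bar G$-neighborhood is a $\bar G$-clique. Removing $v$ yields $G'$ with $\bar{G'}$ chordal, so $L(I(G'))$ is Cohen-Macaulay by the induction hypothesis. The lattice $L(I(G))$ is obtained from $L(I(G'))$ by adjoining atoms $x_vx_w$ for each edge $vw\in E(G)$ together with their joins with existing elements. The simplicial structure of $v$ in $\bar G$ (equivalently, the fact that $v$ is $G$-adjacent to a well-controlled set of vertices) would then be used to show that the relevant links in the order complex of $L(I(G))$ are either cones (hence acyclic) or joins of Cohen-Macaulay subcomplexes, which are Cohen-Macaulay by Lemma~\ref{Join of CM is CM}.

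For the converse (non-chordal implies obstruction), suppose $\bar G$ contains an induced cycle $v_1v_2\cdots v_k v_1$ of length $k\ge 4$. If $k = 4$, we already obtained a violation of $2$-degree gradedness, hence a failure of the linear resolution property via Theorem~\ref{linear_resolutions_cm_lcm_lattice}. For $k \geq 5$, let $m = x_{v_1}\cdots x_{v_k}\in L$. By Theorem~\ref{betti_lcm_lattice_connection}, the reduced homology of the order complex $O((1,m))$ computes the multigraded Betti numbers $\beta_{i,m}(I(G))$. A computation (using that the atoms of $L$ below $m$ correspond to the edges of the induced subgraph $G[\{v_1,\ldots,v_k\}]$, namely the chords of the cycle) identifies this order complex, up to homotopy, with the independence complex of $C_k$, which is classically known to have non-trivial reduced homology in a dimension strictly below $k-3$. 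This produces a Betti number in a non-linear strand, so $I(G)$ does not have a linear resolution.

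The main obstacle is the chordal-implies-CM induction: the lcm-lattice does not decompose cleanly upon removal of a simplicial vertex, since new joins can mix old and new elements in subtle ways that depend on the clique structure of $\bar G$. A cleaner alternative route would be to bypass the lcm-lattice here and invoke the Eagon-Reiner theorem (Theorem~\ref{linear_resolutions_cm_alexander_dual}) together with a direct argument that the Alexander dual of the independence complex of $G$ -- whose facets are the complements $V \setminus e$ for $e \in E(G)$ -- is shellable when $\bar G$ is chordal, constructed via a simplicial-vertex elimination order. The homology computation for the $k \ge 5$ case is standard but requires identifying $O((1,m))$ with the independence complex of $C_k$ up to homotopy, which is the most delicate step of the converse.
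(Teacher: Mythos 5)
The paper does not prove Fr\"oberg's theorem: it is cited as a known result and used as a black box in Section 4, so there is no internal argument to compare against. Evaluating your proposal on its own terms, the reduction via Theorem~\ref{linear_resolutions_cm_lcm_lattice} is a reasonable idea, and your observation that $L(I(G))$ is $2$-degree graded if and only if $\bar G$ is $C_4$-free is correct and checks out. But the ``chordal $\Rightarrow$ Cohen--Macaulay'' direction is never actually established. You acknowledge that removing a simplicial vertex of $\bar G$ does not induce a clean decomposition of $L(I(G))$, and the proposed fallback --- proving shellability of $\mathrm{Ind}(G)^{\vee}$ by a simplicial elimination order and invoking Eagon--Reiner --- is itself the substantive content of the classical proof; once you take that route, the detour through the lcm-lattice and Theorem~\ref{linear_resolutions_cm_lcm_lattice} adds nothing. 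In its current form this direction is an outline, not a proof.

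There is also an outright error in the $k \ge 5$ converse. The order complex $O((1,m))$ for $m = x_{v_1}\cdots x_{v_k}$ is not homotopy equivalent to $\mathrm{Ind}(C_k)$. Comparing Theorem~\ref{betti_lcm_lattice_connection} with Hochster's formula and applying Alexander duality inside the vertex set $\{v_1,\dots,v_k\}$ gives $\tilde H_j(O((1,m));k) \cong \tilde H_j\bigl((\mathrm{Ind}(\overline{C_k}))^{\vee};k\bigr)$; since $\mathrm{Ind}(\overline{C_k})$ is the clique complex of $C_k$, i.e.\ the $1$-dimensional cycle itself, Alexander duality places the nonzero reduced homology of this dual entirely in dimension $k-4$. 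The independence complex $\mathrm{Ind}(C_k)$, by contrast, has its homology concentrated near dimension $\lfloor k/3\rfloor - 1$ (Kozlov), so the two already disagree for $k=6$ (dimension $2$ versus $1$) and for all larger $k$. Your desired conclusion --- a nonzero multigraded Betti number outside the linear strand, hence no $2$-linear resolution --- still holds, because $k-4 < k-3$, but the homotopy identification on which you rest the step is wrong and needs to be replaced by the dual-of-the-cycle computation. So the proposal contains one genuine gap and one genuine error, and does not constitute a proof as written.
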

Herzog, Hibi and Zheng  showed the following:

\begin{theorem}[\cite{herzog2003monomialidealspowerslinear}, Theorem 3.2]
\label{linear_resolutions_equivalent_to_quotients}
    Let $I=I(G)$ be an edge ideal. The following are equivalent:
    \begin{enumerate}
        \item $I$ has linear resolution.
        \item $I$ has linear quotients.
    \end{enumerate}
\end{theorem}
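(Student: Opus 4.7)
The implication $(2) \Rightarrow (1)$ is immediate from Theorem \ref{linear_quotients_stronger_than_linear_resolution}, since $I(G)$ is generated in degree $2$. The content of the theorem is therefore the reverse implication $(1) \Rightarrow (2)$, which I would approach via Fr\"oberg's characterization combined with the structure of chordal graphs.

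Suppose $I(G)$ has a linear resolution. By Theorem \ref{froberg_theorem}, the complement $\bar{G}$ is chordal and hence admits a perfect elimination ordering of its vertices $v_1, v_2, \ldots, v_n$: for each $i$, the neighbors of $v_i$ in $\bar{G}$ that lie in $\{v_{i+1}, \ldots, v_n\}$ induce a clique in $\bar{G}$. I would use this PEO to order the generators of $I(G)$ by sorting each edge $\{v_a, v_b\}$ (with $a<b$) lexicographically with respect to a suitable convention on the pair $(a,b)$ (the exact convention chosen so as to make the verification below work).

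To verify the linear quotients property, fix an edge $e_j = \{v_a, v_b\}$ and an earlier edge $e_i = \{v_c, v_d\}$; the task is to produce an edge $e_k$, earlier than $e_j$, such that $e_k / \gcd(e_k, e_j)$ is a single variable which divides $e_i / \gcd(e_i, e_j)$. The PEO is designed precisely to deliver this: by focusing on the smallest-indexed vertex appearing in $e_i \cup e_j$, simpliciality in $\bar{G}$ either directly provides the edge $e_k$ in $G$, or forces a clique in $\bar{G}$ whose non-edges in $G$ produce it.

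The main obstacle is this combinatorial verification: one must analyze cases according to how the vertices of $e_i$ and $e_j$ interleave in the PEO and whether they share endpoints. The disjoint case and the shared-vertex case each require invoking chordality of $\bar{G}$ in a distinct way to manufacture $e_k$, and calibrating the order on edges so that $e_k$ really precedes $e_j$. Alternatively, one could route the argument through Theorems \ref{linear_resolutions_cm_lcm_lattice} and \ref{linear_quotients_cl_shellability}, reducing the equivalence to the statement that for lcm-lattices of edge ideals, $2$-degree graded plus Cohen-Macaulay is equivalent to $2$-degree graded plus CL-shellable; but the nontrivial implication there demands essentially the same chordality input, so the direct construction via PEO appears cleaner.
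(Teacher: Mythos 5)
The paper does not prove Theorem~\ref{linear_resolutions_equivalent_to_quotients}; it is stated as a cited result of Herzog, Hibi and Zheng, so there is no in-paper argument to compare against. Your proposed strategy, $(1)\Rightarrow\bar G$ chordal via Theorem~\ref{froberg_theorem}, then a perfect elimination ordering of $\bar G$ inducing a linear-quotients order on the edges of $G$, is the standard route and it does go through, but you stop short of the verification, which is the entire content. Two calibrations close the gap. First, the shared-vertex case needs no chordality at all: if $e_i\cap e_j\neq\emptyset$ then $e_i/\gcd(e_i,e_j)$ is already a single variable and $e_k=e_i$ works, so your remark that both cases ``require invoking chordality'' is off. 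Second, the disjoint case works with the explicit convention: take $v_1,\dots,v_n$ a PEO of $\bar G$ (so $N_{\bar G}(v_p)\cap\{v_{p+1},\dots,v_n\}$ is a clique), and order edges $\{v_a,v_b\}$ (with $a<b$) by $\max$ first, then $\min$. With $e_j=\{v_a,v_b\}$, $e_i=\{v_c,v_d\}$ disjoint and earlier (so $d<b$), one needs some $e_k\in\{\{v_a,v_c\},\{v_a,v_d\},\{v_b,v_c\},\{v_b,v_d\}\}$ to lie in $G$ and precede $e_j$. Consider the smallest index $m=\min(a,c)$. If $m=a$ and neither $\{v_a,v_c\}$ nor $\{v_a,v_d\}$ is in $G$, then $v_a$ is adjacent in $\bar G$ to both $v_c,v_d$ with higher indices, so the PEO forces $\{v_c,v_d\}\in\bar G$, contradicting $e_i\in G$; if $m=c$ and none of $\{v_a,v_c\},\{v_a,v_d\},\{v_b,v_c\}$ is in $G$, then $v_c$ is adjacent in $\bar G$ to both $v_a,v_b$ with higher indices, forcing $\{v_a,v_b\}\in\bar G$, contradicting $e_j\in G$. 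In each case the surviving candidate is checked to precede $e_j$ under the stated order (edges with $\max<b$ always do; $\{v_b,v_c\}$ does when $c<a$). So the approach is sound, but as written the proposal is an outline rather than a proof.
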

Applying our result from Section~\ref{linear_quotients_ideals_cl_shellability_section} we prove Corollary \ref{chordality_cl_shellability}:

\begin{proof}
By Theorems \ref{froberg_theorem}+\ref{linear_resolutions_equivalent_to_quotients}, $$\bar{G} \text{ is chordal } {\iff} I \text{ has linear quotients }$$ And by Theorem \ref{linear_quotients_cl_shellability}, $$I \text{ has linear quotients }{\iff} L \text{ is CL-shellable } 2\text{-degree graded}$$
\end{proof}

Nevo \cite{nevo2009regularitytopologylcmlatticec4free} studied regularity of edge ideals via the lcm-lattice. One of his results was the following:
\begin{theorem}\label{thm:nevo}[\cite{nevo2009regularitytopologylcmlatticec4free}, Theorem 3.1]
    Let $G$ be a simple graph, $\bar{G}$  is chordal, $L=L(I(G))$. Then $O(\hat{L})$ is constructible.
\end{theorem}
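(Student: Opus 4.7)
The plan is to deduce the statement directly from the paper's Corollary~\ref{chordality_cl_shellability} together with two classical implications from poset shellability theory, rather than reproving Nevo's topological argument from scratch. The entire content of Theorem~\ref{thm:nevo} will then follow as a purely combinatorial corollary of the lcm-lattice characterization developed in Section~\ref{linear_quotients_ideals_cl_shellability_section}.

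First, I would apply Corollary~\ref{chordality_cl_shellability} to rewrite the hypothesis: since $\bar{G}$ is chordal, we know that $L = L(I(G))$ is $2$-degree graded and CL-shellable. In particular, $L$ is bounded and graded, so the order complex $O(\hat{L})$ of its proper part $\hat{L} = L \setminus \{\hat{0}, \hat{1}\}$ is pure of dimension $\mathrm{length}(L) - 2$, which is what is needed to even speak of constructibility (recall that the definition of constructibility in the paper is stated only for pure complexes).

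Second, I would invoke the foundational fact from \cite{bjorner1983lexicographically} that CL-shellability of a bounded graded poset $P$ implies shellability, in the classical simplicial-complex sense of the paper, of the order complex $O(\bar{P})$ of its proper part. This is essentially the reason the notion was introduced. Applied to our $L$, it yields that $O(\hat{L})$ is shellable. Third, I would close with the well-known observation that every shellable pure simplicial complex is constructible: if $F_1, \dots, F_k$ is a shelling order and $\Delta_i = \langle F_1, \dots, F_i \rangle$, then $\Delta_i = \Delta_{i-1} \cup \langle F_i \rangle$ with $\Delta_{i-1} \cap \langle F_i \rangle$ pure of codimension $1$ in $F_i$ (this is exactly the shelling condition), and each piece is inductively constructible, with base case a simplex.

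The main obstacle, such as it is, lies in the middle step: the assertion that CL-shellability of the poset, as formulated here through recursive atom orderings (Definition~\ref{rao_definition}), entails shellability of $O(\hat{L})$. This is not proved in the present paper but is one of the main results of \cite{bjorner1983lexicographically}, so the work is really in citing it correctly and making sure the purity dimensions match. Once that is granted, the three steps chain together immediately and give Nevo's theorem as a short consequence of the CL-shellability characterization in Theorem~\ref{linear_quotients_cl_shellability}, with no additional topological input required.
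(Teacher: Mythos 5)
Your proposal is correct and takes exactly the same route as the paper: the paper observes that Nevo's original proof contains a gap but that the theorem is salvaged by Corollary~\ref{chordality_cl_shellability} together with the chain $\text{CL-shellable}\Rightarrow\text{shellable}\Rightarrow\text{constructible}$, which is precisely your three-step argument. Your write-up just spells out more carefully the purity bookkeeping and the citation of Bj\"orner--Wachs for the middle implication, which the paper leaves implicit.
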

A mistake was found in the proof and discussed with Nevo. Specifically, the claim at the end of the proof that $$\Delta_{l-1}\cap ((x_{v_l},m_G))\cong \Delta_{l-1}(G[V-v_l])$$ is incorrect. As a counter example, consider the graph $G'$ on vertices $\{x,y,z,w,u\}$ presented in Figure~\ref{fig:eran_counterexample_graph}.

\begin{figure}[ht]
    \centering
    \includegraphics[width=0.25\textwidth]{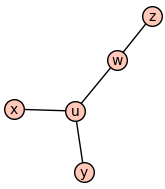}
    \caption{Illustration of the graph $G'$}
    \label{fig:eran_counterexample_graph}
\end{figure}

The ordering $x,y,z,w,u$ gives a Dirac ordering for the graph. Considering the lcm-lattice of the edge ideal (see Figure~\ref{fig:eran_counterexample_lcm_lattice}), and taking $l=5$, we see that $\text{dim}(\Delta_{l-1}\cap ((x_{v_l},m_{G'})))=1$ while $\text{dim}(\Delta_{l-1}(G'[V-v_l]))=-1$, hence such an isomorphism does not exist. 

\begin{figure}[ht]
    \centering
    \includegraphics[width=1.0\textwidth]{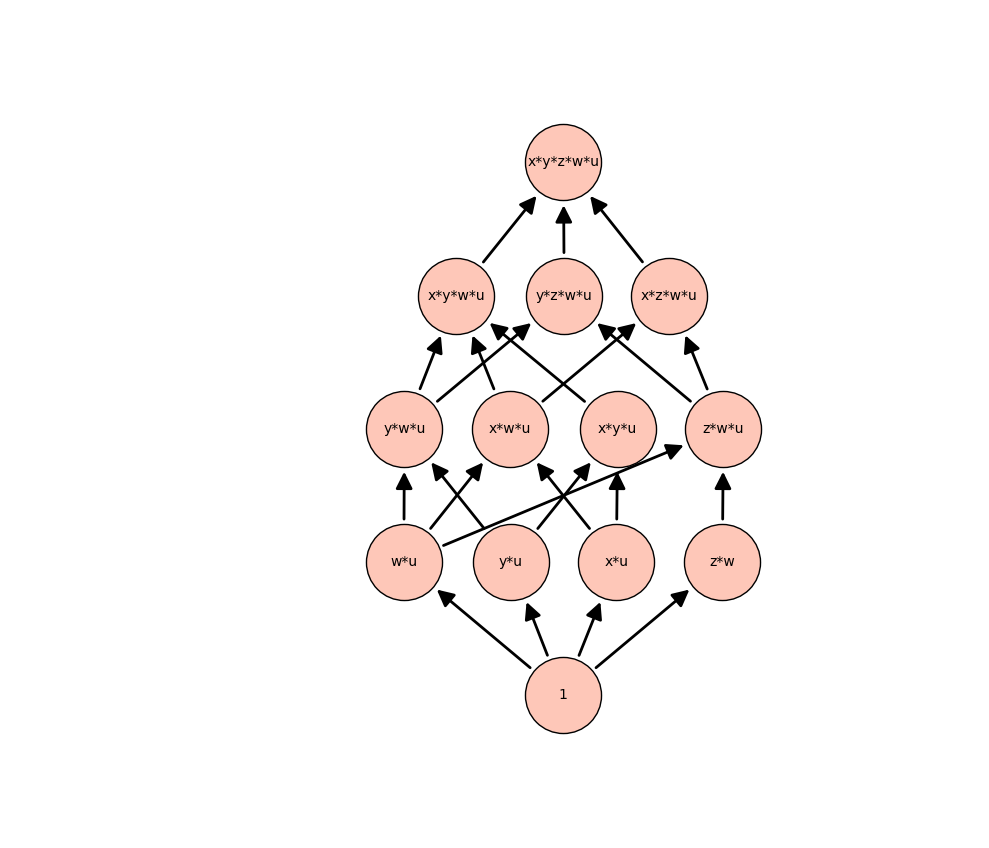}
    \caption{Illustration of the lcm-lattice $L(I(G'))$}
    \label{fig:eran_counterexample_lcm_lattice}
\end{figure}

However this is remedied by Corollary \ref{chordality_cl_shellability} since it is known that $\text{CL-shellability}\Rightarrow \text{shellability}\Rightarrow \text{constructibility}$, hence the assertion in Theorem \ref{thm:nevo} does hold.

\section{Further Research}
\label{further_research}

In this paper we characterized linear quotients and linear resolutions of monomial ideals in terms of their lcm-lattices. We showed that a monomial ideal has a $d$-linear resolution if and only if its lcm-lattice is $d$-degree graded and Cohen-Macaulay, and that a monomial ideal generated by monomials of degree $d$ has linear quotients if and only if its lcm-lattice is $d$-degree graded and CL-shellable. The lcm-lattice may have combinatorial properties that are stronger than being Cohen-Macaulay, but weaker than being CL-shellable. Notable ones are being constructible or shellable. A natural question that arises is:

\begin{question}
    Let $I$ be a monomial ideal, $L=L(I)$ its lcm-lattice. What algebraic property of $I$ is equivalent to $L$ being constructible? What algebraic property of $I$ is equivalent to $L$ being shellable?
\end{question}

Another interesting question that arises is whether in the case of lcm-lattices (equivalently, finite atomic lattices), shellability is equivalent to CL-shellability. It is known (\cite{bjorner1983lexicographically}, Theorem 3.3) that a graded poset that is CL-shellable, is shellable. Conversely, in \cite{Vince1985ASP} a poset that is shellable but not CL-shellable was given. Specifically, Vince and Wachs presented the graph $G$ given in Figure \ref{fig:vince_wachs_counter}.

\begin{figure}
    \centering
    \includegraphics[width=0.9\linewidth]{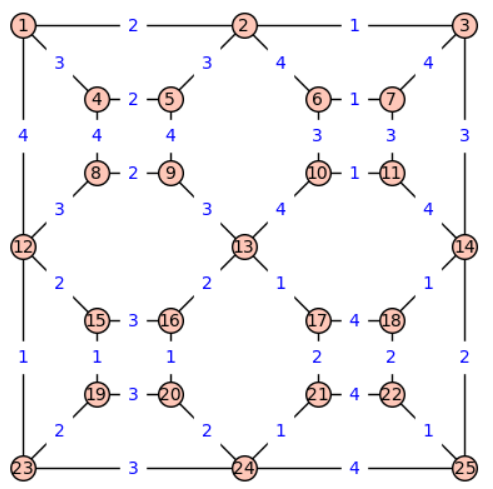}
    \caption{Vince-Wachs edge-colored graph $G$}
    \label{fig:vince_wachs_counter}
\end{figure}

They defined the poset $P(G)$ as follows: Let $I=\{1,2,3,4\}$ be the color set, $J\subseteq I$. $S_J$ is defined to be the set of connected components in the subgraph of $G$ consisting of edges with colors in $J$. $P(G)$ consists of pairs $(H,J)$ where $J\subseteq I$ and $H\in S_J$, and a minimal element $\hat{0}$. The order relation is defined by $(H,J)\leq (H', J')$ iff $H\subseteq H'$ and $J\subseteq J'$. Notice the maximal element $\hat{1}=(V(G),I)$. Theorem 3 in \cite{Vince1985ASP} shows that $P(G)$ is not CL-shellable, despite being shellable.

We will now show that $P(G)$ is not a lattice: consider the elements $x=(\{22\}, \emptyset)$, $y=(\{4\}, \emptyset)$. The upper bounds of both $x$ and $y$ in $P(G)$ are $(\{1,4,8,12,21,22,23,24,25\},\{1,3,4\})$, $(\{1,2,3,4,5,14,18,22,25\}, \{1,2,3\})$ and the maximal element $\hat{1}$. One can see that none of these is a least upper bound, hence these two elements don't have a join.

To our knowledge, there is no known example of a finite atomic lattice that is shellable but not CL-shellable. We thus pose the following question:

\begin{question}
\label{cl_shellability_and_shellability_question}
    Do there exist finite atomic lattices that are shellable but not CL-shellable?
\end{question}

\newpage
    \section*{Acknowledgements}
I would like to thank my thesis advisor, Eran Nevo, for his constant support and for useful discussions and advice. It has been a pleasure working with him, and a great learning opportunity.
    \bibliography{references}

\begin{thebibliography}{10}

\bibitem{bjorner1983lexicographically}
Anders Bj{\"o}rner and Michelle Wachs.
\newblock On lexicographically shellable posets.
\newblock {\em Transactions of the American Mathematical Society}, 277(1):323--341, 1983.

\bibitem{Bjrner1996ShellableNC}
Anders Bj{\"o}rner and Michelle~L. Wachs.
\newblock Shellable nonpure complexes and posets. ii.
\newblock {\em Transactions of the American Mathematical Society}, 349:3945--3975, 1996.

\bibitem{bruns1998cohen}
W.~Bruns and H.J. Herzog.
\newblock {\em Cohen-Macaulay Rings}.
\newblock Cambridge Studies in Advanced Mathematics. Cambridge University Press, 1998.

\bibitem{EAGON1998265}
John~A Eagon and Victor Reiner.
\newblock Resolutions of stanley-reisner rings and alexander duality.
\newblock {\em Journal of Pure and Applied Algebra}, 130(3):265--275, 1998.

\bibitem{EoMJoin}
{Encyclopedia of Mathematics, https://encyclopediaofmath.org/wiki/Join}.
\newblock {Join}, 2024.

\bibitem{RalfFröberg1990}
Ralf Fröberg.
\newblock On stanley-reisner rings.
\newblock {\em Banach Center Publications}, 26(2):57--70, 1990.

\bibitem{gasharov1999lcm}
Vesselin Gasharov, Irena Peeva, and Volkmar Welker.
\newblock The lcm-lattice in monomial resolutions.
\newblock {\em Mathematical Research Letters}, 6(5):521--532, 1999.

\bibitem{herzog2011monomial}
J{\"u}rgen Herzog and Takayuki Hibi.
\newblock {\em Monomial ideals}.
\newblock Springer, 2011.

\bibitem{herzog2002resolutions}
J{\"u}rgen Herzog and Yukihide Takayama.
\newblock {Resolutions by mapping cones}.
\newblock {\em Homology, Homotopy and Applications}, 4(2):277 -- 294, 2002.

\bibitem{herzog2003monomialidealspowerslinear}
JÜRGEN HERZOG, TAKAYUKI HIBI, and XINXIAN ZHENG.
\newblock Monomial ideals whose powers have a linear resolution.
\newblock {\em Mathematica Scandinavica}, 95(1):23--32, 2004.

\bibitem{Morales_2014}
Marcel Morales, Abbas Nasrollah~Nejad, Ali Akbar Yazdan~Pour, and Rashid Zaare-Nahandi.
\newblock Monomial ideals with 3-linear resolutions.
\newblock {\em Annales de la Faculté des sciences de Toulouse: Mathématiques}, 23(4):877–891, October 2014.

\bibitem{nevo2009regularitytopologylcmlatticec4free}
Eran Nevo.
\newblock Regularity of edge ideals of c4-free graphs via the topology of the lcm-lattice.
\newblock {\em Journal of Combinatorial Theory, Series A}, 118(2):491--501, 2011.

\bibitem{Nevo2013C4freeEI}
Eran Nevo and Irena Peeva.
\newblock C4-free edge ideals.
\newblock {\em Journal of Algebraic Combinatorics}, 37:243--248, 2013.

\bibitem{peeva2010graded}
Irena Peeva.
\newblock {\em Graded syzygies}, volume~14.
\newblock Springer Science \& Business Media, 2010.

\bibitem{phan2005minimal}
Jeffry Phan.
\newblock Minimal monomial ideals and linear resolutions.
\newblock {\em arXiv preprint math/0511032}, 2005.

\bibitem{VINCE198591}
Andrew Vince.
\newblock A non-shellable 3-sphere.
\newblock {\em European Journal of Combinatorics}, 6(1):91--100, 1985.

\bibitem{Vince1985ASP}
Andrew Vince and Michelle~L. Wachs.
\newblock A shellable poset that is not lexicographically shellable.
\newblock {\em Combinatorica}, 5:257--260, 1985.

\end{thebibliography}
    \bibliographystyle{plain}

\newpage
\appendix

\section{Appendix: monomial ideals with linear resolutions and their lcm-lattices}
\label{appendix:A}

In this appendix we will give an explicit proof of Theorem \ref{linear_resolutions_cm_lcm_lattice}.

The following theorem is a slight reformulation of (\cite{phan2005minimal}, Theorem 5.8):
\begin{theorem}
    \label{cm_lcm_lattice_equivalence_phan}
    Let $L$  be a finite atomic lattice, identified with $L(M(L))$ in the natural way. Then $M(L)$  has a $d$-linear resolution if and only if $L$  is Cohen-Macaulay as a poset and is $d$-degree graded.
\end{theorem}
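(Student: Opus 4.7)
My plan is to prove this via the Gasharov–Peeva–Welker formula (Theorem \ref{betti_lcm_lattice_connection}), which translates the multigraded Betti numbers of $M(L)$ into the reduced homology of open intervals of $L$. Having a $d$-linear resolution for $M(L)$ means $\beta_{i,m}(M(L))=0$ whenever $\deg(m)\neq d+i$, and via Theorem \ref{betti_lcm_lattice_connection} this is equivalent to the topological condition that for every $m\in L$ with $m\neq 1$, the reduced homology $\tilde H_j(O((1,m));k)$ vanishes unless $j=\deg(m)-d-1$. The atoms of $L$ correspond bijectively to the minimal generators of $M(L)$, so ``generated in degree $d$'' is the same as ``all atoms have degree $d$''.

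For the reverse direction, assume $L$ is Cohen-Macaulay and $d$-degree graded. Then $M(L)$ is generated in degree $d$. Fix $m\in L\setminus\{1\}$. Degree-gradedness forces the degrees appearing in the open interval $(1,m)$ to range over $\{d,d+1,\ldots,\deg(m)-1\}$ in unit steps along any maximal chain, so any maximal chain in $(1,m)$ has exactly $\deg(m)-d$ elements and $\dim O((1,m))=\deg(m)-d-1$. Cohen-Macaulayness of $L$ then implies the open interval $(1,m)$ is itself Cohen-Macaulay, whence its reduced homology is concentrated in top degree. Combining this with Theorem \ref{betti_lcm_lattice_connection} yields the required $d$-linear resolution.

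For the forward direction, assume $M(L)$ has a $d$-linear resolution. Atoms have degree $d$ since $M(L)$ is generated in degree $d$, and Theorem \ref{betti_lcm_lattice_connection} gives the vanishing of $\tilde H_j(O((1,m));k)$ outside $j=\deg(m)-d-1$ for every $m\neq 1$. Phan's Theorem 5.8 from \cite{phan2005minimal} directly supplies the Cohen-Macaulay conclusion. For $d$-degree gradedness, the core observation is that $M(L)$, being the minimal ideal in the sense of Theorem \ref{minimal_ideal_definition}, realizes $L$ with the smallest possible degrees. Combining this minimality with the homology-vanishing condition above forces every maximal chain in $(1,m)$ to have length $\deg(m)-d-1$, and hence every cover $a\prec b$ with $a\neq 1$ to satisfy $\deg(b)=\deg(a)+1$; any larger jump would produce an element whose open interval has dimension smaller than $\deg(m)-d-1$ while still forced by Phan's construction to carry non-vanishing top homology, contradicting $d$-linearity. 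The main obstacle is making this minimality-based rigidity argument precise, because it draws on the explicit construction of $M(L)$ in \cite{phan2005minimal} rather than any intrinsic property of $L$ alone.
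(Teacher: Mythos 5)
The paper does not actually prove this theorem: it is presented as ``a slight reformulation of (\cite{phan2005minimal}, Theorem 5.8)'' and simply cited. What the paper \emph{does} prove is the strictly more general Theorem~\ref{linear_resolutions_cm_lcm_lattice_squarefree} (for arbitrary squarefree monomial ideals, not just minimal ones), so if you want a proof of the statement at hand, the honest route is to specialize that proof to $I=M(L)$. With that in mind, here is an assessment of your attempt.

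Your reverse direction is sound and matches the spirit of the paper's argument for Theorem~\ref{linear_resolutions_cm_lcm_lattice_squarefree}: $d$-degree-gradedness fixes $\dim O((1,m))=\deg(m)-d-1$, Cohen-Macaulayness of $L$ (via links in the order complex) kills all non-top homology of each $O((1,m))$, and Theorem~\ref{betti_lcm_lattice_connection} then produces the $d$-linearity.

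The forward direction has a genuine gap, which you in fact flag yourself. Two issues. First, invoking Phan's Theorem~5.8 to ``directly supply the Cohen-Macaulay conclusion'' is circular in practice: the statement you are proving \emph{is} (a reformulation of) Phan's Theorem~5.8, so you should instead derive Cohen-Macaulayness from the GPW vanishing together with $d$-degree-gradedness, not the other way around. Second, and more seriously, your argument for $d$-degree-gradedness does not work as written. You claim that a cover $a\prec b$ with $\deg(b)>\deg(a)+1$ ``would produce an element whose open interval has dimension smaller than $\deg(m)-d-1$.'' That inference is false: the dimension of $O((1,b))$ is governed by the \emph{longest} chain in $(1,b)$, and a single large jump along one chain does not shorten the longest chain. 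The appeal to ``Phan's construction\dots forced to carry non-vanishing top homology'' is not a precise statement, and you acknowledge you cannot make the minimality-based rigidity rigorous. The correct way to close this gap, and the route the paper takes for the general theorem, is the inductive argument of Lemma~\ref{linear resolution implies degree graded}: show (Lemma~\ref{super_atoms_in_linear_resolutions}, from Theorem~\ref{betti_lcm_lattice_connection}) that super atoms of $L$ are exactly the first Betti degrees and hence all have degree $d+1$, then pass to the ideal $I_1$ generated by the first Betti degrees, use Lemma~\ref{ideal_has_linear_resolution_then_I_1_as_well} to see $I_1$ still has a linear resolution, and induct on the length of the lattice. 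This argument relies only on Alexander duality and GPW, not on the internals of $M(L)$, and is exactly the ``intrinsic'' replacement for the step you were unable to make precise.
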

We will now strengthen the formulation of Theorem \ref{cm_lcm_lattice_equivalence_phan}
  for monomial ideals in general, and not only for minimal monomial ideals for a given lattice. Notice that Phan proved that if an ideal $I$ has a linear resolution, then so does $M(L)$ where $L=L(I)$ is the lcm-lattice of $I$ (\cite{phan2005minimal}, Theorem 5.2), but not the other way around. We will rely heavily on the work of Phan \cite{phan2005minimal} and use some of his results and proofs but will not use his construction of minimal monomial ideals.

 \begin{definition}
    Let $I$ be a monomial ideal. The \textbf{$i-th$ Betti degrees} of $I$ are the monomials $m$ for which $\beta_{i,m}(I)\neq 0$. The first Betti degrees are the monomials $m$ for which $\beta_{1,m}(I)\neq 0$.
\end{definition}

\begin{definition}
    Let $I$ be a monomial ideal, $L=L(I)$ its lcm-lattice. A monomial $m$ is called a \textbf{super atom} of $L$ if it covers an atom of $L$.
\end{definition}

\begin{lemma}
\label{super_atoms_in_linear_resolutions}
    Let $I$ be a monomial ideal with a linear resolution, $L=L(I)$ its lcm-lattice. Then a monomial $m$ is a super atom of $L$ if and only if it is a first Betti degree of $I$. In particular, super atoms of $L$ are all of degree $d+1$.
\end{lemma}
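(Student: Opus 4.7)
The plan is to derive both implications directly from the Gasharov-Peeva-Welker formula (Theorem \ref{betti_lcm_lattice_connection}), together with the fact that a $d$-linear resolution forces every nonvanishing $\beta_{i,j}(I)$ to satisfy $j = d+i$.

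For the forward implication, suppose $m$ is a super atom, covering an atom $a \in L$. I will argue that $\{a\}$ is an isolated vertex of the order complex $O((1,m))$. Any $y \in (1,m)$ with $y > a$ would lie strictly between $a$ and $m$, contradicting $a \prec m$; and any $y \in L$ with $y < a$ must equal $1$ because $a$ is an atom, and so $y \notin (1,m)$. Next, $m$ is not itself an atom, since atoms cover $1$ rather than another atom, so by atomicity of $L$ it is the join of at least two atoms; in particular $(1,m)$ contains some atom $a' \neq a$ as an additional vertex. Therefore $O((1,m))$ is disconnected, $\tilde{H}_0(O((1,m));k) \neq 0$, and Theorem \ref{betti_lcm_lattice_connection} yields $\beta_{1,m}(I) \neq 0$.

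For the backward implication, assume $m$ is a first Betti degree; then $m \in L$, and linearity of the resolution forces $\deg m = d+1$. Since $L$ is atomic and $m \neq 1$, there is at least one atom $a \leq m$. We have $a < m$, as $\deg a = d < d+1 = \deg m$. If some $y \in L$ satisfied $a < y < m$, then $\deg y$ would be an integer strictly between $d$ and $d+1$, which is impossible. Hence $a \prec m$, so $m$ covers the atom $a$ and is a super atom. The ``in particular'' part follows at once: by the forward implication every super atom is a first Betti degree, and linearity pins all first Betti degrees to internal degree $d+1$.

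No serious obstacle arises; the points requiring care are verifying that $(1,m)$ indeed has a second vertex (so that the isolated vertex $\{a\}$ actually disconnects $O((1,m))$), and observing that the degree obstruction in the backward direction forces the covering relation without needing any global degree-gradedness of $L$ or Phan's machinery.
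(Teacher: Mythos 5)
Your proof is correct and takes essentially the same approach as the paper's: both directions rest on the Gasharov--Peeva--Welker formula together with the degree constraint $\beta_{1,m}\neq 0 \Rightarrow \deg m = d+1$ coming from linearity. You simply make explicit the two steps the paper compresses — that the covered atom $a$ is an isolated vertex of $O((1,m))$ while atomicity supplies a second vertex, and that in the converse direction the degree gap $d < d+1$ forces the cover relation $a\prec m$.
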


\begin{proof}
    This stems from Theorem \ref{betti_lcm_lattice_connection}. If $m$ is a super atom, then $$\beta_{1,m}(I)=\textrm{dim} \ \tilde{H}_{0}(O(1,m);k)\neq 0.$$
    Indeed, by the definition of an lcm-lattice a super atom must be the lcm of at least two atoms, hence its interval has at least two connected components. Thus since $I$ has a $d$-linear resolution, we must have $\textrm{deg}(m)=d+1$.

    Conversely, assume $m$ is a first Betti degree of $I$. Then as discussed above it must have degree $d+1$ and it must belong to $L$ by Theorem \ref{betti_lcm_lattice_connection}. Thus by degree considerations it must be a super atom.
\end{proof}

\begin{lemma}[\cite{gasharov1999lcm}, Proposition 2.3(b)]
\label{correspondence between lattices}
    Let $\Delta$ be a simplicial complex. Consider the lattice $L_{\Delta ^{\vee}}$ whose elements are intersections of facets of $\Delta^{\vee}$, ordered by reverse inclusions (with the minimal element being the entire vertex set). Then $L(I_{\Delta})\cong L_{\Delta^{\vee}}$
\end{lemma}

\begin{proof}
    (\textit{sketch}) The correspondence between atoms and facets of the Alexander dual $m \leftrightarrow [n]\setminus \textrm{supp}(m)$ extends to a correspondence between the lcms and the appropriate intersections.
\end{proof}

\begin{lemma}[\cite{phan2005minimal}, Corollary 5.4]
\label{ideal_has_linear_resolution_then_I_1_as_well}
    Let $I$ be a squarefree monomial ideal with a $d$-linear resolution. Then $I_1$, which is the ideal generated by first Betti degrees of $I$ also has a linear resolution.
\end{lemma}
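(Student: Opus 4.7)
The plan is to work at the level of the lcm-lattice $L := L(I)$ and reduce the claim to a rank-selection argument. First, by (\cite{phan2005minimal}, Theorem 5.2) the minimal ideal $M(L)$ inherits a $d$-linear resolution from $I$; combining this with Theorem \ref{cm_lcm_lattice_equivalence_phan} shows that $L$ is Cohen-Macaulay and $d$-degree graded.

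Next I would identify $L(I_1)$ as a subposet of $L$. By Lemma \ref{super_atoms_in_linear_resolutions}, the atoms of $L(I_1)$ are precisely the super atoms of $L$. Using that $L$ is graded and atomic, for any $m \in L$ of rank $\geq 2$ and any atom $a \leq m$, the interval $[a,m]$ has rank $\geq 1$ and therefore contains an element $s$ that covers $a$; such $s$ is a super atom of $L$ satisfying $a \leq s \leq m$. Hence $m$, being the join of the atoms below it, is also the join of the super atoms below it, so $m \in L(I_1)$. This yields the identification $L(I_1) = \{\hat{0}\} \cup L_{\geq 2}$ as posets (where $L_{\geq 2}$ denotes the rank-$\geq 2$ elements of $L$), with cover relations and degrees inherited from $L$; in particular $L(I_1)$ is $(d+1)$-degree graded.

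Then I would invoke Baclawski's rank-selection theorem: deleting the rank-$1$ stratum of the Cohen-Macaulay graded poset $L$ produces a poset whose order complex is Cohen-Macaulay, and this is exactly $L(I_1)$. Consequently every open interval $(1,m)_{L(I_1)}$ is Cohen-Macaulay of dimension $\rho - 2$, where $\rho$ denotes the rank of $m$ in $L(I_1)$, so $\tilde{H}_{j}(O((1,m)_{L(I_1)}); k) = 0$ for $j \neq \rho - 2$.

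Finally, Theorem \ref{betti_lcm_lattice_connection} gives $\beta_{i,m}(I_1) = \dim \tilde{H}_{i-1}(O((1,m)_{L(I_1)}); k)$, which is nonzero only when $i = \rho - 1$; the $(d+1)$-degree grading then forces $\deg m = d + \rho = (d+1) + i$, so all nonvanishing Betti numbers of $I_1$ lie on the linear strand, i.e., $I_1$ has a $(d+1)$-linear resolution. The main obstacle is the identification $L(I_1) = \{\hat{0}\} \cup L_{\geq 2}$; once this is in place, Baclawski's theorem and the Gasharov-Peeva-Welker formula handle the rest mechanically.
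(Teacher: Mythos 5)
Your proposal is correct as a mathematical argument, but it takes a genuinely different route from the paper, and there is a dependency issue worth flagging.

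The paper proves this lemma entirely on the Alexander dual side: it identifies $\Delta_1$ (the subcomplex generated by intersections of at least two facets of $\Delta=\Delta(I)^\vee$), shows by hand that $\Delta_1$ is pure of codimension one and Cohen--Macaulay via an induction on dimension together with an explicit topological argument (attaching top-dimensional simplices along codimension-one faces does not disturb low-dimensional homology), and then transfers back with Eagon--Reiner. Your proof instead stays entirely inside the lcm-lattice: you identify $L(I_1)$ as the rank-selected subposet $\{\hat{0}\}\cup L_{\geq 2}$ (a correct and clean observation --- every element of rank $\ge 2$ is the join of the super atoms below it, since in a graded atomic lattice each atom below $m$ lies under some cover of that atom inside $[a,m]$), invoke Baclawski's rank-selection theorem to preserve Cohen--Macaulayness, and then read off the linear strand from the Gasharov--Peeva--Welker formula. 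The poset-theoretic route is arguably more conceptual and avoids the ad hoc topology; the paper's route is more self-contained within the Stanley--Reisner toolbox already developed.

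The concern is the very first step. You obtain ``$L$ is Cohen--Macaulay and $d$-degree graded'' by citing Phan's Theorem~5.2 (that $M(L)$ inherits the linear resolution) and then Theorem~\ref{cm_lcm_lattice_equivalence_phan} (Phan's Theorem~5.8). Two issues: first, the paper explicitly says it will not use Phan's construction of minimal ideals $M(L)$, so your proof does not meet the paper's stated goal of an $M(L)$-free argument; second, and more substantively, in Phan's thesis the proof of Theorem~5.8 rests on his Proposition~5.7 (here Lemma~\ref{linear resolution implies degree graded}), which in turn rests on his Corollary~5.4 --- which is exactly the statement you are trying to prove. So if one unwinds the references the argument is circular. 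Within the paper's exposition, where Theorem~\ref{cm_lcm_lattice_equivalence_phan} is stated as an external black box, the proof is formally acceptable; but to make it genuinely independent you would need a direct argument that $L$ is Cohen--Macaulay and $d$-degree graded from the hypothesis ``$I$ has a $d$-linear resolution'' that does not pass through $M(L)$ or through the lemma itself. The rest of the argument --- the identification of $L(I_1)$, Baclawski, and the Betti-number bookkeeping --- is sound.
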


\begin{proof}
    We will first prove a dual statement about simplicial complexes. Let $\Delta$ be a simplicial complex. Denote $\Delta_1$ to be the simplicial subcomplex of $\Delta$ generated by intersections of at least two facets of $\Delta$. Let us show that $\Delta_1$ is pure of codimension 1, and that if $\Delta$ is Cohen-Macaulay then so is $\Delta_1$. 
    
    The part about being a subcomplex of codimension 1 follows from Lemmas \ref{correspondence between lattices} and \ref{super_atoms_in_linear_resolutions}: Indeed, consider the lattice of intersections of facets of $\Delta$ defined in Lemma \ref{correspondence between lattices}, $L_{\Delta}$. Then super atoms of $L_{\Delta}$ are precisely the facets of $\Delta_1$. By the Eagon-Reiner theorem, $\Delta$ being Cohen-Macaulay implies that $I_{\Delta^{\vee}}$ has a $(n-\textrm{dim}(\Delta)-1)$-linear resolution where $n$  is the number of vertices. By Lemma \ref{super_atoms_in_linear_resolutions} we get that super atoms of $L_{I_{\Delta^{\vee}}}$ are of degree $(n-\dim(\Delta))$.  Thus by the isomorphism between lattices from Lemma \ref{correspondence between lattices} we get that the super atoms of $L_\Delta$  are of cardinality $\textrm{dim}(\Delta)$, hence $\Delta_1$ is of codimension 1.
    
    As for Cohen-Macaulayness, 
    For $\textrm{dim}(\Delta)=1,0$ this is easily verified. 
    Assume this is true for $\textrm{dim}(\Delta)=l-1$, and let $\Delta$ be of dimension $l$. If $F$ is a face of $\Delta$ that is not empty, then $\textrm{dim}(\textrm{link}_{\Delta}(F))<l$. $\textrm{link}_{\Delta}(F)$ is CM of dimension $<l$, so by induction hypothesis $(\textrm{link}_{\Delta}F)_1$ is CM as well. Since $(\textrm{link}_{\Delta}F)_1=\textrm{link}_{\Delta_1}F$ we are done with this case.

    Assume now $F=\emptyset$. We need to show that $\Delta_1$ has no nontrivial homology in dimension $<l-1$. We will show that in these dimensions $\tilde{H_i}(\Delta_1)=\tilde{H}_i(\Delta)$ and finish by assumption that $\Delta$ is CM. Indeed, $\Delta$ is obtained from $\Delta_1$ by attaching $l$-simplices along codimension 1 faces. If such a simplex's boundary is fully contained in $\Delta_1$, then its addition only adds an $l$ chain to the chain complex and doesn't affect homology in dimensions $<l-1$. Otherwise, after the attachment of this simplex then there is a deformation retract of this simplex $G$ onto $2^G\cap \Delta_1$, hence we haven't changed the homology at all. So $\Delta_1$ is indeed CM.

     Now, let $I$ be a squarefree monomial ideal with a linear resolution. Applying as above the Stanley-Reisner correspondence, denote $\Delta=\Delta(I)^{\vee}$. Then as discussed, by Lemma \ref{correspondence between lattices} we have $\Delta_1=\Delta(I_1)^{\vee}$. By Eagon-Reiner, $\Delta$ is CM, hence by the above $\Delta_1$ is as well. Applying Eagon-Reiner again, we get that $I_1$ has a linear resolution and we are done.
\end{proof}

The following is Proposition 5.7 in \cite{phan2005minimal}:

\begin{lemma}
\label{linear resolution implies degree graded}
    Let $I$ be a squarefree monomial ideal with a $d$-linear resolution. Then $L=L(I)$ is $d$-degree graded.
\end{lemma}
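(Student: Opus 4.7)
The plan is to prove the lemma by iterating Lemmas \ref{super_atoms_in_linear_resolutions} and \ref{ideal_has_linear_resolution_then_I_1_as_well}. Set $I_0 = I$ and inductively $I_{k+1} = (I_k)_1$, the ideal generated by the first Betti degrees of $I_k$. First I would verify by induction on $k$ that $I_k$ is squarefree and admits a $(d+k)$-linear resolution: squarefreeness propagates because lcms of squarefree monomials are squarefree; Lemma \ref{ideal_has_linear_resolution_then_I_1_as_well} gives that $I_{k+1}$ has a linear resolution, and Lemma \ref{super_atoms_in_linear_resolutions} applied to $I_k$ identifies its first Betti degrees as super atoms of $L(I_k)$, all in degree $d+k+1$. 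Consequently the atoms of $L(I_k)$ have degree exactly $d+k$, and $L(I_k) \subseteq L(I)$ as a subposet that shrinks to $\{1\}$ for $k$ large.

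The structural bridge between these nested lattices is the observation that every non-atom, non-unit element of $L(I)$ lies in $L(I_1)$. Given such $c = \mathrm{lcm}(m_1, \ldots, m_r)$ with $r \geq 2$ distinct atoms, each interval $[m_i, c]$ in $L(I)$ is nontrivial, so it contains some $s_i$ covering $m_i$; this $s_i$ is a super atom of $L(I)$ with $m_i \leq s_i \leq c$. Then $\mathrm{lcm}(s_1, \ldots, s_r)$ is both at most $c$ and at least $\mathrm{lcm}(m_1, \ldots, m_r) = c$, hence equals $c$, placing $c$ in $L(I_1)$. Iterating this observation gives the chain of sublattices used above.

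Now let $a \prec b$ be a cover in $L(I)$ with $a \neq 1$, and set $k = \max\{\,i : a \in L(I_i)\,\}$. Then $a$ is an atom of $L(I_k)$, so $\deg(a) = d+k$ by the first paragraph. Because $I$ is squarefree and $a$ properly divides $b$, we have $\deg(b) > \deg(a) = d+k$; since the atoms of $L(I_i)$ have degree exactly $d+i$, this forces $b$ not to be an atom of any $L(I_i)$ with $i < k$, so $b \in L(I_k)$ as well. The cover relation $a \prec b$ persists in the sub-poset $L(I_k)$ (no new intermediate elements can appear inside a smaller lattice), so $b$ is a super atom of $L(I_k)$. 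Applying Lemma \ref{super_atoms_in_linear_resolutions} to $I_k$ yields $\deg(b) = d+k+1 = \deg(a)+1$, as required.

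The main technical obstacle is justifying the iterative setup cleanly: verifying that the sequence $\{I_k\}$ stays squarefree with a progressively shifted linear resolution at each step, and that covering relations in $L(I)$ really transfer into the nested sub-lattices $L(I_k)$ without gaining or losing intermediate elements. The strict degree monotonicity in the squarefree case is what ensures $b$ survives in the level $L(I_k)$ where $a$ has become an atom, preventing it from being accidentally removed at an earlier stage of the iteration.
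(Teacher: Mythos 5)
Your proof is correct and follows essentially the same route as the paper: both reduce the problem by repeatedly passing from $I$ to $I_1$ via Lemmas \ref{super_atoms_in_linear_resolutions} and \ref{ideal_has_linear_resolution_then_I_1_as_well}, with the paper phrasing this as an induction on the length of $L$ and you phrasing it as an explicit iterated chain $I_k$. The one place where you add genuine content is the ``structural bridge'': the paper asserts, via Lemma \ref{super_atoms_in_linear_resolutions}, that ``the lattice whose atoms are the super atoms is $L(I_1)$'' but does not verify that $L(I_1)$ consists of \emph{all} non-atom, non-unit elements of $L(I)$; you prove this by writing each such element $c$ as an lcm of super atoms sandwiched between atoms and $c$. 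This detail is needed to transfer cover relations in $L(I)$ down to $L(I_k)$, and spelling it out is a worthwhile strengthening of the paper's argument rather than a new method.
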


\begin{proof}
    By induction on the length of $L$ (i.e. $\textrm{dim}(O(L))$). If the length is 1, $I$ is principal and $L$ has two the elements 1, $m_L$ and we are done. If we know the result for length $n-1$, then for length $n$ it suffices to show that for superatoms $m$, $\textrm{deg}(m)=d+1$. This is because by Lemma \ref{super_atoms_in_linear_resolutions} the lattice whose atoms are the super atoms is $L(I_1)$, so induction hypothesis will finish the proof by Lemma \ref{ideal_has_linear_resolution_then_I_1_as_well}. But the claim is true for super atoms by Lemma \ref{super_atoms_in_linear_resolutions} so we are done.
\end{proof}
The following is a known topological fact we will need:
\begin{lemma}
\label{Join of CM is CM}
    Let $\Delta_1$, $\Delta_2$ be simplicial complexes, $\Delta_1 \ast \Delta_2$ their join, $k$ a field. If  $\tilde{H_i}(\Delta_1;k)=0 \ \textrm{for all } i < \mathrm{dim}(\Delta_1)$ and $\tilde{H_i}(\Delta_2;k)=0 \ \textrm{for all } i < \mathrm{dim}(\Delta_2)$ then $\tilde{H_i}(\Delta_1 \ast \Delta_2;k) = 0 \ \textrm{for all } i < \mathrm{dim}(\Delta_1 \ast \Delta_2)$ 
\end{lemma}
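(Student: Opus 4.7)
The plan is to invoke the standard Künneth-type formula for the join. Recall that for any two finite simplicial complexes there is a natural isomorphism
\[
  \tilde{H}_n(\Delta_1 \ast \Delta_2;k) \;\cong\; \bigoplus_{i+j=n-1} \tilde{H}_i(\Delta_1;k)\otimes_k \tilde{H}_j(\Delta_2;k),
\]
valid because $k$ is a field so all Tor-terms vanish; one can derive this either from the reduced simplicial chain complex of the join being (up to a shift) the tensor product of the reduced chain complexes of the factors, or topologically from $\Delta_1 \ast \Delta_2 \simeq \Sigma(\Delta_1\wedge\Delta_2)$ combined with the ordinary Künneth theorem.

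Set $d_1=\dim(\Delta_1)$, $d_2=\dim(\Delta_2)$, so $\dim(\Delta_1\ast\Delta_2)=d_1+d_2+1$. For any $n<d_1+d_2+1$ and any pair $(i,j)$ with $i+j=n-1$ contributing to the above sum, we have $i+j<d_1+d_2$. On the other hand, reduced simplicial homology always vanishes above the dimension, so the only potentially nonzero summands have $i\le d_1$ and $j\le d_2$. Since $i+j<d_1+d_2$, at least one of the strict inequalities $i<d_1$ or $j<d_2$ holds; the hypothesis then forces the corresponding factor, and hence the tensor product, to be zero. Thus every summand vanishes, giving $\tilde{H}_n(\Delta_1\ast\Delta_2;k)=0$ for all $n<\dim(\Delta_1\ast\Delta_2)$.

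The only real content is the Künneth formula for the join, which is standard; the counting argument about which factor must lie strictly below its top dimension is then immediate. If one prefers to avoid invoking the join Künneth directly, an alternative is to proceed by induction on $\dim(\Delta_2)$: adding a single vertex to $\Delta_2$ forms a cone on $\Delta_1$ in the join, and a Mayer–Vietoris sequence relating $\Delta_1\ast\Delta_2$ to $\Delta_1\ast(\Delta_2\setminus v)$, $\Delta_1\ast\mathrm{star}(v)$ and their intersection $\Delta_1\ast\mathrm{link}(v)$ propagates the vanishing. But this route is strictly longer, so the Künneth approach is the cleanest plan.
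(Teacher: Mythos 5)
Your proof is correct and takes essentially the same approach as the paper: both invoke the Künneth formula for the join (with Tor terms vanishing over a field) and then use the dimension count $\dim(\Delta_1 \ast \Delta_2)=\dim(\Delta_1)+\dim(\Delta_2)+1$ to conclude that for any summand below top dimension one of the two tensor factors must have strictly subtop index and hence vanish. You are in fact a bit more careful than the paper in noting explicitly that homology vanishes above dimension, so $i\le d_1$, $j\le d_2$ can be assumed before deducing that one inequality is strict; the paper leaves this implicit.
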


\begin{proof}
    Recall the K\"unneth formula for the homology of a join of topological spaces (\cite{EoMJoin}):
    \[
\widetilde{H}_{r+1}(X \ast Y) \cong 
\bigoplus_{i+j=r} \widetilde{H}_i(X) \otimes \widetilde{H}_j(Y)
\oplus
\bigoplus_{i+j=r-1} \mathrm{Tor}(\widetilde{H}_i(X), \widetilde{H}_j(Y)).
\]
Since we are working over a field, we get:
    \[
\widetilde{H}_{r+1}(X \ast Y;k) \cong 
\bigoplus_{i+j=r} \widetilde{H}_i(X;k) \otimes \widetilde{H}_j(Y;k)
\]

Assume that $\tilde{H}_{r+1}(\Delta_1 \ast \Delta_2;k)\neq 0$ for some $r+1<\mathrm{dim}(\Delta_1 \ast \Delta_2)$, this means that $\textrm{there are } i,j$ with $i+j=r$ such that both $\tilde{H_i}(\Delta_1)\neq 0$ and $\tilde{H_j}(\Delta_2)\neq 0$. But $$\textrm{dim}(\Delta_1 \ast \Delta_2) = \textrm{dim}(\Delta_1)+\textrm{dim}(\Delta_2)+1$$ 
hence $r<\textrm{dim}(\Delta_1)+\textrm{dim}(\Delta_2)$. This means that either $i<\textrm{dim}(\Delta_1)$ or $j<\textrm{dim}(\Delta_2)$, contradicting our assumption.

\end{proof}
We will now prove the general connection between monomial ideals with linear resolutions and their lcm-lattices. This extends \ref{cm_lcm_lattice_equivalence_phan}, which proves the statement for minimal monomial ideals.

Now we prove the squarefree case:

\begin{theorem}
    \label{linear_resolutions_cm_lcm_lattice_squarefree}
    Let $I$ be a squarefree monomial ideal. Then $I$ has a $d$-linear resolution if and only if $L(I)$ is $d$-degree graded and Cohen-Macaulay.
\end{theorem}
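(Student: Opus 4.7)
The plan is to use Theorem \ref{betti_lcm_lattice_connection} as the main engine in both directions: it identifies $\beta_{i,m}(I)$ with $\dim \tilde{H}_{i-1}(O((1,m));k)$, so both a $d$-linear resolution and the CM-plus-degree-graded structure of $L$ translate into constraints on the reduced homology of open intervals $(1,m)$ and on the degrees of $m$.

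For the ``$\Leftarrow$'' direction I would assume $L$ is CM and $d$-degree-graded. The CM hypothesis forces the reduced homology of each $O((1,m))$ to be concentrated in the top dimension $\mathrm{rank}(m)-2$, so Theorem \ref{betti_lcm_lattice_connection} gives $\beta_{i,m}(I)=0$ unless $i=\mathrm{rank}(m)-1$. The $d$-degree-graded hypothesis says $\deg(m)=d+\mathrm{rank}(m)-1=d+i$ exactly when $\beta_{i,m}(I)\neq 0$, so all nonzero Betti numbers lie on the linear strand. Since the atoms of $L$ are the minimal generators of $I$ and all have degree $d$, $I$ is generated in degree $d$ and thus has a $d$-linear resolution.

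For the ``$\Rightarrow$'' direction Lemma \ref{linear resolution implies degree graded} immediately provides the $d$-degree-graded part. The real work is CM-ness, and I would establish it by showing that every open interval $(x,y)$ in $L$ has Cohen-Macaulay order complex, splitting on whether $x=\hat{0}=1$. When $x\neq 1$, Lemma \ref{d_degree_graded_gives_total_semimodularity} makes $[x,y]$ totally semimodular, and the classical shellability theorem of Bj\"orner for such bounded posets yields CM-ness. When $x=1$, Theorem \ref{betti_lcm_lattice_connection} together with the linear resolution and the degree-graded structure already pins down $\tilde{H}_i(O((1,y));k)=0$ for $i$ below the top dimension; to upgrade this to full CM-ness of $O((1,y))$ I would examine the link of a nonempty chain $\{z_1<\cdots<z_l\}$ in $O((1,y))$, which decomposes as the join $O((1,z_1))*O((z_1,z_2))*\cdots *O((z_l,y))$. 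The first factor has top-only homology by the same Betti argument, the remaining factors are CM from the $x\neq 1$ case, and Lemma \ref{Join of CM is CM} then propagates top-only homology across the join.

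The main obstacle is that Theorem \ref{betti_lcm_lattice_connection} only directly sees open intervals based at $\hat{0}$, while Cohen-Macaulayness demands control on all open intervals $(x,y)$; we also cannot simply appeal to Phan's Theorem \ref{cm_lcm_lattice_equivalence_phan} here, since it concerns the minimal monomial ideal $M(L)$ rather than the given $I$. The $d$-degree-graded hypothesis rescues intervals with $x\neq \hat{0}$ via Lemma \ref{d_degree_graded_gives_total_semimodularity}, reducing the genuinely homological input to $x=\hat{0}$ where the Betti interpretation applies, and the K\"unneth-type Lemma \ref{Join of CM is CM} is what allows the Betti input and the semimodular input to be combined inside a single link computation.
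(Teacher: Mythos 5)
Your proof is correct and takes essentially the same route as the paper: both directions rest on the multigraded Betti formula (Theorem \ref{betti_lcm_lattice_connection}), the semimodularity of intervals above an atom coming from $d$-degree gradedness (Lemma \ref{d_degree_graded_gives_total_semimodularity}), and the K\"unneth join lemma (Lemma \ref{Join of CM is CM}). The only cosmetic difference is that the paper decomposes links of faces in $O(L)$ directly into joins of (half-)open intervals, while you first reduce to Cohen--Macaulayness of each open interval and then decompose links inside $(1,y)$ --- a reorganization of the same ingredients.
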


\begin{proof}
    Assume $I$ has a $d$-linear resolution. Then by \ref{linear resolution implies degree graded} we get that $L(I)$ is $d$-degree graded.
    Let $F\in O(L(I))$ be a face in the order complex of $L(I)$. Then $\textrm{link}_{O(L)}(F)$ is a join of intervals of the form $(m_1, m_2), [1,m_1), (m_1, m_L]$ for some arbitrary monomials $m_1,m_2\in L$ or the entire lattice itself $O(L(I))$. We need to show that $\tilde{H}_i(\textrm{link}_{O(L)}(F);k)=0 \textrm{for all } i<\textrm{dim(link}_{O(L)}F)$.

    By \ref{Join of CM is CM}, it suffices to show that all non top dimensional homologies vanish for each one of the intervals above.
    For intervals that are closed or half closed, this is trivial because they are cones hence acyclic. Assume an interval of the form $(m_1, m_2)$ where $m_1\neq 1$. By \ref{d_degree_graded_gives_total_semimodularity}, $[m_1, m_2]$ is semimodular, hence shellable. Thus $(m_1, m_2)$ is shellable as well with the same shelling order. Thus it is Cohen-Macaulay, and in particular has no nontrivial non-top dimensional homology.
    
    We are left with the case of an interval of the form $(1,m)$. Assume $\tilde{H_i}(O(1,m);k)\neq 0$ for some $i$. Then by $\ref{betti_lcm_lattice_connection}$ we get that $\beta_{{i+1},m}(I)\neq 0$. Since $I$ as a $d$-linear resolution, we must have $\textrm{deg}(m)-i-1 = d \iff i=\textrm{deg}(m)-d-1$. But since $L(I)$ is $d$-degree graded, we have $\textrm{dim}(O(1,m))=\textrm{deg}(m)-d-1$, hence $i=\textrm{dim}(O(1,m))$ as desired.

    Conversely, assume that $L(I)$ is $d$-degree graded and Cohen-Macaulay.  We need to show that if $\beta_{i,j}(I)\neq 0$ then $i\geq j-d$. By the equation $$\beta_{i,j}(I)=\sum_{m\in \text{mon}(S), \text{deg}(m)=j}\beta_{i,m}(I)$$ we need to show that $\beta_{i,m}(I)=0$ $\textrm{for all } m\in \text{mon}(S), i<\text{deg}(m)-d$.  By \ref{betti_lcm_lattice_connection}, it suffices to show this for $m\in L(I)$.

    Indeed, let $m\in L(I)$. Choose any chain $E=\left(m=l_0\prec l_1 \prec \dots \prec l_t = m_L\right)$, then we get $\text{link}_{O(L)}(E\cup \{1\})=O((1, m))$. Since $L$ is Cohen-Macaulay and $E\cup \{1\}\in O(L)$, we get that $$\tilde{H}_i(O((1,m));k)=0 \ \textrm{for all } i<dim(O((1,m)))$$Now, because $L$ is $d$-degree graded, we get that $dim(O((1,m)))=deg(m)-d-1$. Thus
    $$\tilde{H}_i(O((1,m));k)=0 \ \textrm{for all } i<deg(m)-d-1$$
    Applying \ref{betti_lcm_lattice_connection} and changing indexes we get 
    $$\beta_{i,m}(I)=0 \ \textrm{for all } i<deg(m)-d$$ as desired.
    
\end{proof}

Let us generalize Theorem \ref{linear_resolutions_cm_lcm_lattice_squarefree} to monomial ideals in general using two simple lemmas. The first is well known, for example as stated in \cite{Morales_2014}:

\begin{lemma}
\label{linear_resolutions_polarizations}
    Let $I$ be a monomial ideal. Then $I$ has a $d$-linear resolution if and only if its polarization has a $d$-linear resolution.
\end{lemma}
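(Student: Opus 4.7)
The plan is to deduce this lemma from the stronger fact that polarization preserves all graded Betti numbers, i.e., $\beta_{i,j}^S(I) = \beta_{i,j}^T(\tilde{I})$ for all $i,j$. Once that is in hand, the equivalence of $d$-linearity is immediate: having a $d$-linear resolution is the statement that $\beta_{i,j} = 0$ whenever $j \neq i+d$, together with the ideal being generated in degree $d$, and both conditions transfer since (i) $\deg v_\ell = \deg u_\ell = d$ for every minimal generator and (ii) the vanishing pattern of the Betti table is preserved.

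To obtain the equality of Betti numbers, I would use the standard regular-sequence argument. Consider, for each variable $x_j$ of $S$ with $a_j \geq 2$, the linear forms $y_{jk} = x_{jk} - x_{j1}$ in $T$ for $k = 2, \ldots, a_j$, and let $\underline{y}$ be the collection of all such forms. The first step is to verify that $\underline{y}$ is a regular sequence on $T/\tilde{I}$; this is the classical key lemma about polarizations and is proved by an induction on the length of $\underline{y}$, using that each $\tilde{I}$ is a squarefree monomial ideal and that the specific linear forms chosen do not annihilate any associated prime of the intermediate quotients. The second step is to identify $T/(\tilde{I}, \underline{y})$ with $S/I$ via the $k$-algebra map sending every $x_{jk}$ to $x_j$, which collapses each block $v_i$ back to $u_i$.

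With these two ingredients, I would invoke the standard fact that quotienting by a regular sequence of linear forms preserves graded Betti numbers: if $\underline{y}$ is a regular sequence of linear forms on a finitely generated graded $T$-module $M$, then $\beta_{i,j}^T(M) = \beta_{i,j}^{T/(\underline{y})}(M/\underline{y} M)$. Applied to $M = T/\tilde{I}$, this gives $\beta_{i,j}^T(T/\tilde{I}) = \beta_{i,j}^S(S/I)$, and hence $\beta_{i,j}^T(\tilde{I}) = \beta_{i,j}^S(I)$ by the standard long exact sequence (or by shifting indices).

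The main obstacle is the regular-sequence claim; everything else is formal. Since the statement is standard and the lemma is cited as well known (with a reference to \cite{Morales_2014} in the paper), I would simply invoke that reference for the regularity of $\underline{y}$ rather than redo the inductive argument, and then complete the proof in the two lines of formal manipulation described above.
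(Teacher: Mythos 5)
The paper gives no proof of this lemma---it simply labels it ``well known'' and cites \cite{Morales_2014}---so there is no in-paper argument to compare against. Your outline is the standard and correct one: polarization preserves all graded Betti numbers because the differences $x_{jk}-x_{j1}$ form a regular sequence of linear forms on $T/\tilde I$ whose quotient recovers $S/I$, and killing a regular sequence of linear forms leaves the graded Betti table unchanged; since a $d$-linear resolution is exactly the condition $\beta_{i,j}=0$ for $j\neq i+d$, the equivalence follows. This is precisely the argument the cited reference (and, e.g., Herzog--Hibi) uses, so your proposal is sound and fills in the detail the paper chose to outsource.
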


The next is also an easy lemma regarding polarizations and lcm-lattices:

\begin{lemma}
    \label{polarization_lattices_cohen_macaulay}
    Let $I$ be a monomial ideal, $J$  its polarization, $L(I)
    $ and $L(J)$ the corresponding lcm-lattices. We then have:
    \begin{enumerate}
        \item $L(I)$ is $d$-degree graded iff $L(J)$ is $d$-degree graded.
        \item $L(I)$ is Cohen-Macaulay iff $L(J)$ is Cohen-Macaulay
    \end{enumerate}
\end{lemma}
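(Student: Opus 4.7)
The plan is to leverage Lemma \ref{polarization_lattices_iso}, which already tells us that $L(I)\cong L(J)$ as posets, and then add a modest observation about how the isomorphism interacts with degrees. Part (2) is then essentially free, since Cohen-Macaulayness of a finite poset is a property of its order complex, and the order complex depends only on the isomorphism class of the poset. Thus the poset isomorphism $L(I)\cong L(J)$ immediately transports the Cohen-Macaulay property in either direction.

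For part (1), the additional ingredient I need is that the natural isomorphism $\varphi:L(I)\to L(J)$ provided by Lemma \ref{polarization_lattices_iso} (which sends the atom $u_i$ to the atom $v_i$, and in general $\mathrm{lcm}(u_{i_1},\dots,u_{i_s})$ to $\mathrm{lcm}(v_{i_1},\dots,v_{i_s})$) preserves the degrees of the elements as monomials. I would verify this by a short direct calculation: if $m=\mathrm{lcm}(u_{i_1},\dots,u_{i_s})=\prod_j x_j^{\max_\ell a_{i_\ell,j}}$, then $\deg(m)=\sum_j \max_\ell a_{i_\ell,j}$, while
\[
\varphi(m) \;=\; \mathrm{lcm}(v_{i_1},\dots,v_{i_s}) \;=\; \prod_j \prod_{k=1}^{\max_\ell a_{i_\ell,j}} x_{jk},
\]
so $\deg(\varphi(m))=\sum_j \max_\ell a_{i_\ell,j}=\deg(m)$. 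In particular, polarization takes the atoms of $L(I)$ (the monomials $u_i$) to the atoms of $L(J)$ (the monomials $v_i$) while preserving degrees.

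Once this degree-preservation is in place, both conditions in the definition of $d$-degree gradedness transfer across $\varphi$: the condition that all atoms have degree $d$ holds for $L(I)$ iff it holds for $L(J)$, and the condition that every cover $1\neq a\prec b$ satisfies $\deg(b)=\deg(a)+1$ transfers as well, since $\varphi$ is an isomorphism of posets (so cover relations correspond) and preserves degrees. This gives (1) in both directions.

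There is no real obstacle here; the only thing to be careful about is making sure the isomorphism of Lemma \ref{polarization_lattices_iso} is the specific atom-preserving one described in Definition \ref{polarization_def} (as opposed to some abstract poset isomorphism), because degree preservation is a statement about the specific map, not just the fact that the posets happen to be isomorphic. Once that is noted, both parts follow without further combinatorial or topological work.
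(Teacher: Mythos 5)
Your proof is correct and follows essentially the same approach as the paper: part (2) is immediate from the poset isomorphism of Lemma \ref{polarization_lattices_iso}, and part (1) follows from checking that this specific isomorphism preserves degrees. The only difference is that you spell out the degree-preservation computation, which the paper asserts without proof.
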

\begin{proof}
    Any isomorphism of posets between $L(I)$ and $L(J)$ gives equivalence 2. The isomorphism from Lemma \ref{polarization_lattices_iso} preserves degrees, ensuring equivalence 1.
\end{proof}
We can now prove Theorem \ref{linear_resolutions_cm_lcm_lattice}:
\begin{proof}
    This follows from Theorem \ref{linear_resolutions_cm_lcm_lattice_squarefree}, Lemma \ref{linear_resolutions_polarizations} and Lemma \ref{polarization_lattices_cohen_macaulay}:
    
\begin{tikzcd}
I\text{ has linear a $d$-linear resolution} \arrow[d, Leftrightarrow, "\ref{linear_resolutions_polarizations}"] & & L(I)\text{ is }\text{$d$-degree graded and Cohen-Macaulay} \arrow[d, Leftrightarrow, "\ref{polarization_lattices_cohen_macaulay}"] \\
J\text{ has linear a $d$-linear resolution} \arrow[rr, Leftrightarrow, "\ref{linear_resolutions_cm_lcm_lattice_squarefree}"] & & L(J)\text{ is }\text{$d$-degree graded and Cohen-Macaulay}
\end{tikzcd}

\end{proof}

\end{document}